\documentclass[reqno, 11pt]{amsart}

\makeatletter
\@namedef{subjclassname@2020}{%
  \textup{2020} Mathematics Subject Classification}
\makeatother

\usepackage{ulem}

\usepackage{amsfonts, amsthm, amsmath, amssymb}
\usepackage{hyperref}
\hypersetup{colorlinks=false}

\usepackage{graphicx}

\usepackage{bbm}  

 \usepackage{tabu}

\usepackage[margin=1in]{geometry}

\usepackage{helvet}

\RequirePackage{mathrsfs} \let\mathcal\mathscr

\usepackage{hyperref}
\usepackage{dsfont}
\usepackage{upgreek}
\usepackage{mathabx,yfonts}
\usepackage{enumerate}
\usepackage{xcolor}

\numberwithin{equation}{section}

\newtheorem{theorem}{Theorem}[section] 
\newtheorem{lemma}[theorem]{Lemma}

\theoremstyle{definition}

 \newtheorem*{acknowledgements}{Acknowledgements}
\newtheorem{remark}[theorem]{Remark}

\renewcommand{\emph}[1]{\textit{#1}}

\renewcommand{\phi}{\varphi}

\renewcommand{\leq}{\leqslant}

\renewcommand{\geq}{\geqslant}

\renewcommand{\c}{\mathbf{c}}

 \renewcommand{\b}{\mathbf{b}}

\DeclareSymbolFont{bbold}{U}{bbold}{m}{n}
\DeclareSymbolFontAlphabet{\mathbbold}{bbold}

\renewcommand{\P}{\mathbb{P}}

\newcommand{\Z}{\mathbb{Z}}

\renewcommand{\b}{\mathbf}
\renewcommand{\c}{\mathcal}
\renewcommand{\epsilon}{\varepsilon}

\renewcommand{\leq}{\leqslant}
\renewcommand{\geq}{\geqslant}
\renewcommand{\#}{\sharp}

\newcommand{\beq}[2]
{
\begin{equation}
\label{#1}
{#2}
\end{equation}
}

\title
[Sums of coefficients of $\operatorname{GL}(m)$-automorphic forms]
{Square-root cancellation for 
sums of coefficients of $\operatorname{GL}(m)$-automorphic forms over values of random polynomials}

\author{Dimitrios Chatzakos} 
\address{Department of Mathematics\\ 
University of Patras
\\ 26 504, Patras, Greece}
\email{dchatzakos@math.upatras.gr}

\author{Efthymios Sofos} 
\address{Dipartimento di Matematica\\
Universit{\`a} di Roma
\\Tor Vergata\\
 Via del  la Ricerca Scientifica
 \\ 00133, Rome, Italy}
\email{efthymios.sofos@uniroma2.eu}

\subjclass[2020]{
11F30, 
11N37. 
} 
\date{}

\begin{document} 
\begin{abstract}
For arbitrary $m\geq 2$ and $d\geq 1$, we establish  square-root cancellation for sums of Fourier coefficients of $\mathrm{GL}(m)$-automorphic forms averaged
over values of random degree-$d$ polynomials.
\end{abstract}

\maketitle

\setcounter{tocdepth}{1}
\tableofcontents

\section{Introduction}   \label{s:intro}  
The study of sums of arithmetic functions evaluated along polynomial sequences, given by 
\begin{equation}\label{mainsums}
\sum_{\substack{  n \leq x\\P(n)>0}} g( P(n) ),\end{equation}
where $g:\mathbb N
\to \mathbb C$ and $P \in \mathbb Z[t]$, 
represents a cornerstone of analytic number theory with  
significant connections to other fields. A very important case is when $g$ is the normalized Fourier coefficient  $\lambda_\pi$ of an automorphic form $\pi$ (which are closely related to Hecke eigenvalues). For linear polynomials $P$, the study of sums of the form \eqref{mainsums} reduces to mean value estimates for those Fourier. This problem has been attacked using the theory of summation formulas and $L$-functions (see for instance the survey \cite{MillerSchmid} for the rich history of this problem).

When $\pi$ is a $\mathrm{GL}(2)$ cusp form and $P$ is quadratic, sums of the form \eqref{mainsums}
were studied by Blomer \cite{blom},
Templier \cite{nite}
and Templier--Tsimerman \cite{ttt}.
The work in \cite{blom}
obtained the error term $O(x^{6/7+\epsilon})$
via Kuznetsov’s trace formula, while 
in \cite{ttt} the bound 
$O(x^{1/2+\theta+\epsilon})$  is proved 
via sup norm bounds for Fourier coefficients;
here $\theta$ is the exponent depending only on the specific progress on the  
Selberg's eigenvalue conjecture. Assuming this conjecture one can take $\theta=0$,
while $\theta\leq 7/64$ is currently known   
\cite{blomerbrumley,kim}. We also refer to \cite{sarnakletter} for an extended discussion on sums arising from higher degree polynomials. 
  
As the general problem is   intractable, it is natural to examine its behaviour under generic conditions. 
In the present work, we introduce   alternative methods 
to establish unconditionally
that for any $\mathrm{GL}(m)$ cusp form and for almost all polynomials $P$ of arbitrary degree, we have square-root cancellation, i.e. 
\begin{eqnarray} \label{mainresultheuristic}
\sum_{\substack{ n \leq x\\P(n)>0}} \lambda_\pi( P(n) ) 
=O(x^{1/2}).
\end{eqnarray}
In order to do so, we order the polynomials $P\in \mathbb Z[t]$
by the height
\begin{eqnarray*}
|P|:=\textrm{ maximum of absolute values of its 
coefficients}. 
\end{eqnarray*}

In order to state our results, fix arbitrary $d \in \mathbb N, m \in \mathbb N_{\geq 2}, \epsilon>0$ and denote $$\alpha: = 
\min \left\{ \frac{2}{2d(2m+3) + 3m + 4}, \frac{2}{d(m^2 - 1)} \right\}.$$  
When $m\geq 7$ and $d\neq 1$
we always have $\alpha= 2/(d(m^2 - 1))$. Our first main result is the following. 
\begin{theorem}\label{thm:general_m} 
Let $\pi $ be an 
automorphic irreducible cuspidal 
representation of $\mathrm{GL}(m) $ 
over $\mathbb Q$ with unitary central character. 
Then for all $ x \leq H^{\alpha-\epsilon}$
  we have 
$$\frac{1}{\#\{P\in \Z[t]: \deg(P)=d,
|P|\leq H\}}
\sum_{\substack{ P\in \Z[t], \\ \deg(P)=d, |P|\leq H  }}
\left| \sum_{\substack{ n \leq x\\P(n)>0} } \lambda_\pi(P(n))\right|^2 
\ll
x$$ where the implied constant depends
only on $d,\pi$ and $\epsilon$.
\end{theorem}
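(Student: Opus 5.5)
Expanding the square and swapping the order of summation gives
$$\sum_{P}\Big|\sum_{n\le x}\lambda_\pi(P(n))\Big|^2=\sum_{n_1,n_2\le x}\ \sum_{P:\,P(n_1),P(n_2)>0}\lambda_\pi(P(n_1))\overline{\lambda_\pi(P(n_2))}.$$
The plan is to treat the diagonal $n_1=n_2$ and the off-diagonal $n_1\ne n_2$ separately.

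\textbf{Diagonal.} For $n_1=n_2=n$ we need
$$\frac{1}{\#\{P\}}\sum_P|\lambda_\pi(P(n))|^2\ll 1$$
uniformly in $n\le x$. Write $P(t)=\sum_{i} c_it^i$. For fixed $c_1,\dots,c_d$, the value $P(n)=c_0+(\text{fixed})$ runs over an interval of length $2H+1$ as $c_0$ varies. The Rankin--Selberg mean value
$$\sum_{N<k\le N+M}|\lambda_\pi(k)|^2\ll M$$
for $M\ge N^{1-\delta}$ then bounds the average. Here $|P(n)|\ll Hx^d$, and the condition $x\le H^{\alpha}$ makes the interval length $H$ large compared with $(Hx^d)^{1-\delta}$ for a suitable short-interval exponent $\delta$ (for $\mathrm{GL}(m)$ one has $\delta\approx 2/(m^2+1)$ from the standard short-interval Rankin--Selberg bound). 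This gives a diagonal contribution $\ll x$, and it is presumably the source of the $2/(d(m^2-1))$ term in $\alpha$.

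\textbf{Off-diagonal.} For $n_1\ne n_2$, change variables: the map $P\mapsto (P(n_1),P(n_2),c_2,\dots,c_d)$ is linear in $(c_0,c_1)$ with determinant $n_2-n_1$. So for fixed $c_2,\dots,c_d$, the pair $(a,b)=(P(n_1),P(n_2))$ runs over a lattice of index $|n_1-n_2|$, inside a parallelogram of size about $H\times Hx$. Detect the lattice condition $a\equiv b \pmod{n_2-n_1}$, twisted by the fixed part, with additive characters modulo $q=|n_1-n_2|$. The double sum then factors into products of one-dimensional sums
$$\sum_{a\in I}\lambda_\pi(a)e(ha/q),$$
up to smoothing the parallelogram into boxes, which costs a boundary error. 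Use the Miller/Godber-type uniform bound
$$\sum_{a\le N}\lambda_\pi(a)e(a\theta)\ll N^{1-\eta+\epsilon}$$
(for $\mathrm{GL}(m)$, via Voronoi and $L$-function methods, with $\eta$ roughly $1/(m+1)$ up to harmless factors), or its short-interval version. The product of two such sums saves a power of $H$ over the trivial bound $H^2$ per fixed $(c_2,\dots,c_d)$. Summing over $q\le x$ characters and $x^2$ pairs $(n_1,n_2)$ costs a polynomial in $x$. Requiring the total off-diagonal contribution to be $\ll x\cdot\#\{P\}$ imposes $x\le H^{c}$ with $c=2/(2d(2m+3)+3m+4)$.

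\textbf{Main obstacle.} The hard step is the off-diagonal one: obtaining a bound for additive twists of $\lambda_\pi$ over the relevant intervals that is uniform in the frequency and in the interval endpoints, in the range $N\asymp Hx^d$, with lengths $H$ or $Hx$. Closely tied to this is controlling the error from the sharp cutoffs $P(n_i)>0$ and $|c_i|\le H$. I would handle the cutoffs with smooth partitions and bookkeep the exponents so that everything is at most $x\cdot H^{d+1}$ when $x\le H^{\alpha-\epsilon}$. If the crude bound loses too much, the fallback is Cauchy--Schwarz over $c_0$ combined with the large sieve for additive characters.
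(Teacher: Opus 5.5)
Your diagonal analysis is correct and matches the paper's. For fixed $c_1,\dots,c_d$ the values $P(n)$ fill an interval of length $\asymp H$ at height $\ll Hx^d$, and Lemma~\ref{lem:clasconvexity} gives $\ll H$ once $H\geq (Hx^d)^{a_m+\epsilon}$, which is exactly the constraint $x\le H^{2/(d(m^2-1))-\epsilon}$.

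The off-diagonal step, however, is never carried out. The exponent $2/(2d(2m+3)+3m+4)$ is copied from the statement rather than derived, and two of your ingredients fail as stated.

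\textbf{The input bound.} For general $m$ no bound $\sum_{a\le N}\lambda_\pi(a)e(a\theta)\ll N^{1-\eta}$ uniform in real $\theta$ is known. The paper notes that the best is $N/\log N$, and Miller's $N^{3/4+\epsilon}$ is specific to $m=3$. Your character detection only produces frequencies $\theta=h/q$, so Lemma~\ref{lem:585858} is enough. But it gives $(qN)^{\frac{m+1}{m+2}+\epsilon}$, so the saving is $1/(m+2)$, not about $1/(m+1)$, and the factor $q^{\tau}$ has to be carried through the sum over $n_1\neq n_2$.

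\textbf{The sharp cutoffs.} This is the more serious problem. Fix $n_1<n_2$, $q=n_2-n_1$, and $c_2,\dots,c_d$. The image of $(c_0,c_1)\in[-H,H]^2$ is the parallelogram spanned by $2H(1,1)$ and $2H(n_1,n_2)$. These two vectors are nearly parallel, so the region is a sliver of length $\asymp Hx$ and width $\asymp qH/x$, not an $H\times Hx$ box.
\begin{itemize}
\item Tiling it by axis-parallel boxes of side $L$ leaves a boundary strip containing $\asymp HLx/q$ lattice points, which you can only bound trivially.
\item Keeping that below $H^2/x$ per pair forces $L\ll qH/x^2$. This gives $\asymp x^4/q$ boxes, each costing $(qHx^d)^{2\tau}$ with $\tau=\frac{m+1}{m+2}+\epsilon$.
\item Summing over $n_1,n_2$ and $c_2,\dots,c_d$ then only yields $x\le H^{(1-\tau)/(2+\tau+d\tau)}$.
\end{itemize}
For $d=1$ this is $H^{1/(4m+6)}$, which falls short of $\alpha$; for $m=2$ you get $1/14$ instead of $1/12$.

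\textbf{The missing idea} is the Gauss-kernel device the paper uses. Only an upper bound is claimed, and $|\sum_n\lambda_\pi(P(n))|^2\geq 0$. So you may majorize $\mathds{1}_{[-H,H]}(c_j)$ by $\widehat{K}_H(c_j)$ before expanding the square, and then no boundary terms arise at all. Separating $a$ from $b$ now requires Fourier-expanding the Gaussians. This moves the frequencies to $h/q+\theta$ with $|\theta|$ about $x/(qH)$ up to logarithms, and partial summation from Lemma~\ref{lem:585858} absorbs $\theta$. That is the major-arc lemma of \S\ref{s:senzaarcigrandi}, and it is where $\max\{\sigma,1/2\}$, and hence $2/(2d(2m+3)+3m+4)$, comes from. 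A Whitney-type decomposition, with boxes shrinking toward the boundary, would also rescue the sharp version.

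Once the argument is repaired this way, your choice to freeze $c_2,\dots,c_d$ is worth keeping. It confines $a$ and $b$ to windows of length $\asymp Hx$, instead of the full range $t\le(d+1)x^dH$ used in the paper's $S(\alpha)$. The partial-summation loss therefore drops from $|\theta|x^dH$ to $|\theta|Hx$. By my own bookkeeping (not in the paper), this reproduces the paper's range for $d=1$ and enlarges it for $d\geq 2$.
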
   
This is proved in \S\ref{mentre dormi}
by feeding   work
of Jiang--L\"u--Wang \cite{Jiang} 
into Theorem \ref{gln} (verified in \S\ref{s:senzaarcigrandi}.) The condition $P(n) > 0$ is 
imposed because otherwise $\lambda_\pi(P(n))$ would not be defined for all $\mathrm{GL}(m) $ forms; 
however, the proof is 
easily adapted to accommodate these cases whenever the 
terms are well-defined.

Let us denote by $C_{\pi}$ the limit
$$ C_\pi= \lim_{x\to\infty} \frac{1}{x} 
\sum_{n\leq x }|\lambda_\pi(n)|^2.$$
It is known from Rankin--Selberg theory that this constant essentially equals 
the residue of the 
Rankin--Selberg $L$-function $L(s, \pi \times \tilde{\pi})$ at $s=1$,
where  $\tilde{\pi}$ is the 
dual of $\pi$. In the $\mathrm{GL}(2)$ and  $\mathrm{GL}(3)$ 
cases we show that for random polyomials $P$
one has  an asymptotic rather than a bound:
$$\left|\sum_{\substack{ n \leq x\\P(n)>0} }  \lambda_\pi( P(n) ) \right|
\sim (C_\pi x)^{1/2}.$$ We  
improve   cases of Theorem \ref{thm:general_m} 
in the following manner: 
Let  
$$ \beta(2)=\frac{ 1}{2+3d}, \ \  \
\beta(3):=\frac{1}{7d+4} $$
and fix $d\in \mathbb N$,
$\epsilon>0$ and $N>0$.  
\begin{theorem}\label{thm:small_m} 
Let $m$ be $2$ or $3$ and suppose $\pi $ is an 
automorphic irreducible cuspidal 
representation of $\mathrm{GL}(m) $ 
over $\mathbb Q$ with unitary central character. 
Then for all 
  $x \leq H^{\beta(m)-\epsilon}$
  we have 
$$\frac{1}{\#\{P\in \Z[t]: \deg(P)=d,|P|\leq H\}}
\sum_{\substack{ P\in \Z[t] \\ \deg(P)=d , |P|\leq H  }}
\left| \sum_{\substack{ n \leq x\\ P(n)>0} } g(P(n))\right|^2 
=C_\pi  x+O\left(\frac{x}{(\log x)^N}\right)$$
where the implied constant depends only on $d,\pi,\epsilon$   and $N$.  
\end{theorem}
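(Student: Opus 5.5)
Here $g=\lambda_\pi$. The plan is to expand the square and average over the coefficients of $P$. Write $P(t)=\sum_{i=0}^d a_it^i$ and set $\mathcal P_H=\{P:\deg P=d,|P|\le H\}$. Then
\[
\frac{1}{\#\mathcal P_H}\sum_{P\in\mathcal P_H}\Bigl|\sum_{n\le x,\,P(n)>0}\lambda_\pi(P(n))\Bigr|^2=\sum_{n_1,n_2\le x}\frac{1}{\#\mathcal P_H}\sum_{\substack{P\in\mathcal P_H\\ P(n_1),P(n_2)>0}}\lambda_\pi(P(n_1))\overline{\lambda_\pi(P(n_2))}.
\]

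\textbf{Diagonal terms.} These are the terms with $n_1=n_2=n$. For fixed $n$, the map $P\mapsto P(n)$ depends on $a_0$ through a shift. So as $a_0$ runs over an interval of length $2H$, the value $P(n)$ runs over an interval of length $2H$ containing values of size up to $\asymp Hx^d$. By Rankin--Selberg, specifically the asymptotic $\sum_{k\le y}|\lambda_\pi(k)|^2=C_\pi y+O(y^{1-\delta_m})$ in short intervals of length $H$ around height $Hx^d$, the inner average is $C_\pi+O(H^{-1}(Hx^d)^{1-\delta_m})$. The known exponents are $\delta_2=2/5$ and $\delta_3=2/7$ (more generally $2/(m^2+1)$). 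Condition on $a_1,\dots,a_d$ and average over $a_0$; the constraint $P(n)>0$ cuts out about half of the interval, but the normalisation is arranged so the leading constant is $C_\pi$, which I will check against the definition. Summing over $n\le x$ gives $C_\pi x+O(x\cdot x^{d(1-\delta)}H^{-\delta})$.

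\textbf{Off-diagonal terms.} These are the terms with $n_1\ne n_2$. The linear map $(a_0,a_1)\mapsto(P(n_1),P(n_2))$ has determinant $n_2-n_1\ne 0$. So for fixed $a_2,\dots,a_d$, the pair $(u,v)=(P(n_1),P(n_2))$ ranges over lattice points of a sublattice of index $|n_1-n_2|$ inside a parallelogram. The sublattice condition is $u\equiv v \pmod{n_2-n_1}$ up to a fixed shift, so the sum becomes
\[
\sum_{u\equiv v \,(\mathrm{mod}\ q)}\lambda_\pi(u)\overline{\lambda_\pi(v)}
\]
over a region with $q=|n_1-n_2|\le x$. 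Detecting the congruence with additive characters mod $q$ turns this into $\frac1q\sum_{h\bmod q}\bigl|\sum_u\lambda_\pi(u)e(hu/q)\bigr|^2$-type bilinear expressions, after slicing the parallelogram into segments. I would then use the uniform bound for twisted sums $\sum_{u\le y}\lambda_\pi(u)e(u\theta)\ll y^{1-\eta_m+\epsilon}$. For $\mathrm{GL}(2)$ this is Wilton's bound with $y^{1/2+\epsilon}$; for $\mathrm{GL}(3)$ it is Miller's bound with $y^{3/4+\epsilon}$. Each off-diagonal pair should then contribute $\ll (Hx^d)^{-\eta+\epsilon}x^{O(1)}$ relative to the main term. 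Summing over the $x^2$ pairs gives a power saving provided $x\le H^{\beta(m)-\epsilon}$. I expect the numerology of this step to reproduce $\beta(2)=1/(2+3d)$ and $\beta(3)=1/(7d+4)$.

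\textbf{Main obstacle.} The main obstacle is the off-diagonal step. The parallelogram is long and thin, with side lengths $\asymp H n_i^{\,}$, so each slice is a short additive-twisted sum. I also have to handle the boundary from $P(n)>0$ and the sign condition on the two values jointly. My first attempt is a smooth partition plus Wilton/Miller applied to one variable with the other fixed, since these bounds are uniform in the twist. Failing that, I would fall back on Theorem~\ref{gln}'s machinery, which presumably already contains the required mixed-moment estimate. The $(\log x)^{-N}$ error is then easily absorbed because all the savings are power savings.
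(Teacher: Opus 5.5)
\textbf{The deferred diagonal check fails, and the statement as printed has the same factor-of-two error.} Your observation that $P(n)>0$ removes half is right, and nothing in the normalisation compensates for it. The denominator counts all $P$ with $|P|\le H$, and the bijection $P\mapsto -P$ gives $\sum_{P(n)>0}|\lambda_\pi(P(n))|^2=\tfrac12\sum_{P(n)\neq 0}|\lambda_\pi(|P(n)|)|^2$. Your shift-in-$a_0$ argument, now run without a sign constraint, shows the right side is $\tfrac12 C_\pi(2H)^{d+1}(1+o(1))$ for each $n$. So each $n$ contributes $\tfrac12 C_\pi$. Since the off-diagonal is $o(x)$, the left-hand side equals $\tfrac12 C_\pi x+O(x(\log x)^{-N})$.

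Heuristically, a typical $P$ has real roots of size $O(1)$. So $P(n)>0$ holds for almost all $n\le x$ or for almost none, according to the sign of the leading coefficient.

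The paper's proof of Theorem~\ref{gl3} reaches $(2H)^{d+1}C$ only by asserting $\sum_{t>0,\,t\in[N-H,N+H]}|g(t)|^2=2CH+O(\cdot)$ for every $(c_1,\dots,c_d)$. That fails when $N<-H$, where the sum is empty. So no check can produce $C_\pi$. What your argument proves, like the paper's once corrected, is the asymptotic $\tfrac12 C_\pi x$. One recovers $C_\pi x$ only by averaging over $P$ with positive leading coefficient, or by replacing $\lambda_\pi(P(n))$ with $\lambda_\pi(|P(n)|)$.

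\textbf{Otherwise your plan is the paper's.} Theorem~\ref{thm:small_m} is Theorem~\ref{gl3} fed with two inputs. The first is Wilton's or Miller's uniform twisted bound ($\gamma>1/2$, respectively $\gamma>3/4$). The second is the convexity Rankin--Selberg exponent ($\delta>3/5$, respectively $\delta>4/5$). So your $\delta_3$ should be $1/5$, as your own formula $2/(m^2+1)$ gives; this is harmless, because the diagonal condition is not the binding one.

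\textbf{Off-diagonal: your lattice route versus the paper's circle method.} The paper uses a circle-method integral against Dirichlet kernels together with \cite[Lemma 3.5]{skoro}, which bounds each pair by essentially $\|S\|_\infty\, S_{|g|}(0)\,H^{d-1}$. Your change of variables is an elementary form of the same mechanism. Your ``first attempt'' is the right one. The bilinear form $\frac1q\sum_h|\cdot|^2$ is unnecessary, and the region is not a product anyway.

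Concretely, fix $a_2,\dots,a_d$ and $u=P(n_1)$. Then $v=P(n_2)$ runs over a progression modulo $q=|n_2-n_1|$ inside an interval of height $\ll Hx^d$. Detecting the progression with $\frac1q\sum_{h\bmod q}$ and applying the uniform bound gives $\ll (Hx^d)^{\gamma}$ for the $v$-sum, whatever its length. This settles your worry about short slices. You then sum $|g(u)|$ trivially.

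Using $\sum_u|g(u)|\ll Hx^d$, the total is $\ll H^{d+\gamma}x^{2+d(1+\gamma)}$. This is $O(H^{d+1}x(\log x)^{-N})$ precisely for $x\le H^{(1-\gamma)/(1+d(1+\gamma))-\epsilon}$, which is $\beta(2)$ and $\beta(3)$, as you expected.

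Your route actually buys more. For fixed $a_2,\dots,a_d$, $u$ is confined to an interval of length $\ll Hx$. Rankin--Selberg on that interval gives $\sum_u|g(u)|\ll Hx$, hence a total of $\ll H^{d+\gamma}x^{3+d\gamma}$. That yields the range $x\le H^{(1-\gamma)/(2+d\gamma)-\epsilon}$, wider than $\beta(m)$ for $d\ge 2$, as long as the diagonal condition $x\le H^{(1-\delta)/(d\delta)-\epsilon}$ still holds.
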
 

\begin{remark}
We would like to highlight the impressive
recent progress made on special $GL(2)$ cases of the Rankin--Selberg problem by Huang \cite{Huang} and Dasgupta--Leung--Young \cite{young}. While these important results bring us closer, they don't quite yield an immediate improvement for $\beta(2)$ in Theorem \ref{thm:small_m}, for reasons we outline in Remark \ref{eightvariables}. 
\end{remark}

The proof of Theorem \ref{thm:small_m} 
is in \S\ref{mentre dormi}. 
The $\mathrm{GL}(3) $ case 
uses
work of Miller \cite{Miller}, 
who studied the additive twists   
$$ \sum_{n\leq x } \lambda_\pi (n)  
\mathrm e^{ 2  i \gamma n}  , \ \ \ \gamma \in \mathbb R,$$
whereas for the $\mathrm{GL}(2) $  case we use 
Wilton's analogous bound (see \cite[Theorem 8.1]{MR1942691}).
This input is then fed into 
Theorem \ref{gl3} that is proved in \S \ref{s:proof1}.
It is worth noting that Theorem \ref{gl3} 
is the first result in the literature giving 
asymptotics
for the second moment of sums of \textit{general}
arithmetic functions
over values of random polynomials.

\begin{remark}
It should be noted that, in a diffenent direction, sums of the form
\begin{equation}\label{differentsums}
\sum_{\substack{ n \leq x\\P(n)>0}} 
|\lambda_{\pi}( |P(n)| )|
\end{equation}
behave quite differently as one can only hope for a logarithmic saving.
They are related to various other important questions in the theory of automorphic forms,
the QUE conjecture and Manin's conjecture on counting rational points. We refer to \cite{woo} and references therein for a detailed discussion on the case $m=2$, $d \geq 1$ and its applications. 
\end{remark}
 
\begin{remark}
The study of sums of the form \eqref{mainsums} over the values 
of random polynomials has seen recent developments. 
Work has so far focused on  
the von Mangoldt function
\cite{skoro,joni,woo1,borto}, 
the M\"obius and Liouville functions 
 \cite{joni,woo1,wilson},
 the norm representation function \cite{joni},
and the square-free indicator function \cite{shpar,broshpar,sofo}.
Square-root cancellation is not a priori obvious. Indeed, one has \textit{worse} than square-root cancellation for $ \Lambda$ \cite{borto}, \textit{better} than square-root cancellation for $  \mu^2$ \cite{sofo}, and square-root cancellation for $  \mu$ \cite{wilson,woo}.
\end{remark}
\begin{acknowledgements}
This project was inspired by
conversations 
with Matteo di Scipio 
during the  $9$th
mini symposium of the Roman Number Theory Association.
We are grateful to him and to the organisers of the symposium. 
We are also indebted to Gergely Harcos, Bingrong Huang, Subhajit Jana, Yujiao Jiang and Matthew Young for bringing relevant references to our attention and for 
insightful guidance regarding the literature. D. C. was supported by the Hellenic Foundation for Research and Innovation (H.F.R.I.) under the “3rd Call for H.F.R.I. Research Projects to support Faculty Members \& Researchers” (Project Number: 25622).
\end{acknowledgements}

\section
{Estimates for Fourier coefficients of automorphic forms} \label{secondsection}

In this section we recall all the necessary estimates for Fourier coefficients of automorphic forms that we need for the proofs of our results. 

\subsection{Rankin-Selberg theory} \label{subsection2.1}

Let $\pi$ be a cuspidal automorphic representation of $\hbox{GL}(m, \mathbb{A}_{\mathbb{Q}})$. As usually, we denote by $A_{\pi}(n_1,...,n_{m-1})$ the Fourier coefficient of $\pi$ and let 
\begin{eqnarray}
\lambda_{\pi}(n) := A_{\pi}(n, 1, 1, ..., 1)
\end{eqnarray}
normalized such that 
\begin{eqnarray*}
\lambda_{\pi} (1) = 1.    
\end{eqnarray*}
Asymptotic estimates for the second moment sums of $\lambda_{\pi}(n)$ are known using the classical Rankin-Selberg theory, i.e. the analytic properties of the $L$-function $L(s, \pi \times \tilde{\pi})$. 
In particular, for $m \geq 2$ the classical convexity error term for the Rankin--Selberg problem  in $\mathrm{GL}(m)$ for general $m$ is the following. 

\begin{lemma}\label{lem:clasconvexity}
Let $m \geq 2$ and let $\pi $ be an 
automorphic irreducible cuspidal representation of $\mathrm{GL}(m) $ over $\mathbb Q$ with unitary central character. 
For  fixed $\epsilon>0$ and all $x\geq 1 $ we have  
$$
\sum_{ n \leq x } 
|\lambda_\pi(n)|^2= C_\pi x 
+ O(x^{a_m+\epsilon})
,$$ 
with $a_m=(m^2-1)/(m^2+1)$, where the implied constant depends at most on $\epsilon$ and $\pi$ and $C_{\pi}$ equals 
the residue of the 
Rankin--Selberg $L$-function $L(s, \pi \times \tilde{\pi})$ at $s=1$.
\end{lemma}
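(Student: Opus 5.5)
The plan is to follow the classical Landau-type argument via Perron's formula applied to the Rankin--Selberg $L$-function. First I would relate the Dirichlet series $D(s)=\sum_{n\geq1}|\lambda_\pi(n)|^2 n^{-s}$ to $L(s,\pi\times\tilde\pi)$. Comparing Euler factors at unramified primes, where $\lambda_\pi(p^k)$ is the $k$-th complete symmetric polynomial in the Satake parameters, gives $D(s)=L(s,\pi\times\tilde\pi)\,G(s)$. Here $G(s)$ is an Euler product that converges absolutely in a half-plane $\Re s>1-\delta$ for some $\delta>0$. This uses the Luo--Rudnick--Sarnak bounds $|\alpha_{\pi,j}(p)|\leq p^{1/2-1/(m^2+1)}$ toward Ramanujan. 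The finitely many ramified primes only contribute holomorphic nonvanishing factors, possibly after a harmless finite correction. The Rankin--Selberg theory of Jacquet, Piatetski-Shapiro, Shalika and Moeglin--Waldspurger shows that $L(s,\pi\times\tilde\pi)$ extends meromorphically with only a simple pole at $s=1$ in the relevant strip. It also satisfies a functional equation of degree $m^2$ with analytic conductor $\asymp_\pi (1+|t|)^{m^2}$.

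Next, since $|\lambda_\pi(n)|^2\geq0$, I would use a smoothed Perron formula rather than the sharp one, to avoid needing pointwise bounds on the coefficients. For a parameter $y\leq x$, take a smooth weight $w$ equal to $1$ on $[1,x]$ and supported on $[1/2,x+y]$. Then $\sum_n |\lambda_\pi(n)|^2 w(n)=\frac{1}{2\pi i}\int_{(2)}D(s)\tilde w(s)\,ds$, where $\tilde w(s)\ll x^{\sigma}(x/y)^{A}|s|^{-A-1}$. Shifting the contour to $\Re s=-\eta$ picks up the residue $C_\pi x+O(y)$ at $s=1$, with $C_\pi$ the residue (times $G(1)$, which the statement absorbs into the definition). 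To cross into the region left of $\Re s=1-\delta$, I would work with $L(s,\pi\times\tilde\pi)$ directly by applying the same Perron argument to its own coefficients. These coefficients $a(n)$ are again nonnegative at unramified primes, being $|\cdot|^2$-type sums in the Satake parameters. I would recover $|\lambda_\pi|^2$ by a Dirichlet convolution with the absolutely convergent coefficients of $G$, which costs only $x^{\epsilon}$.

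On the line $\Re s=-\eta$, the functional equation and Stirling's formula give $L(s,\pi\times\tilde\pi)\ll (1+|t|)^{m^2(1/2+\eta)}$. The shifted integral is therefore bounded by $x^{-\eta}\int (1+|t|)^{m^2(1/2+\eta)}\min(1,(x/(y|t|))^{A})\,dt/|t| \ll x^{-\eta}(x/y)^{m^2(1/2+\eta)}$. Letting $\eta\to0$ suitably, or more cleanly via Phragmén--Lindelöf convexity, gives an error of order $x^{\epsilon}(x/y)^{(m^2-1)/2}$ times the appropriate power. The lower smoothing error is handled by positivity, comparing upper and lower weights. I then balance $y$ against $x^{1+\epsilon}\,(x/y)^{\ldots}$. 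The standard computation of Landau for a degree-$D$ $L$-function gives error $x^{(D-1)/(D+1)+\epsilon}$, and with $D=m^2$ this yields exactly $a_m=(m^2-1)/(m^2+1)$.

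The main obstacle I expect is the first step: controlling the ramified and non-tempered local factors so that $D(s)/L(s,\pi\times\tilde\pi)$ is genuinely holomorphic and bounded just left of $\Re s=1$. Equivalently, the difficulty is ensuring the coefficients being summed are nonnegative so that the positivity-based smoothing works. I would try to handle this first by summing the Dirichlet coefficients of $L(s,\pi\times\tilde\pi)$ itself, which are nonnegative. Afterwards I would transfer the result to $|\lambda_\pi(n)|^2$ by convolution with the absolutely convergent correction factor. Only finitely many ramified primes and the Luo--Rudnick--Sarnak bound enter this transfer, so it does not worsen the exponent.
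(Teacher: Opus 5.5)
Your framework matches the paper's, which applies a Perron/Landau argument to the degree-$m^2$ function $L(s,\pi\times\tilde\pi)$ whose residue is its $C_\pi$. However, the step that should produce the exponent $a_m$ fails as you describe it.

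\textbf{The exponent.} With a weight smooth at scale $y$ and only the convexity bound on $\Re s=-\eta$, your own estimate gives an error $y+x^{-\eta}(x/y)^{m^2(1/2+\eta)}$. Write $y=x^b$. The exponent $-\eta+(1-b)m^2(1/2+\eta)$ is linear in $\eta$ with slope $(1-b)m^2-1>0$ at $b=a_m$. So the best choice is $\eta\to0$, which gives $(x/y)^{m^2/2}$, not $(x/y)^{(m^2-1)/2}$. Phragm\'en--Lindel\"of only interpolates between such lines and cannot do better. Optimising what you actually have yields $x^{m^2/(m^2+2)}$: for $m=2$ that is $x^{2/3}$ instead of $x^{3/5}$, and for $\zeta(s)^2$ it is the trivial exponent $1/2$ instead of Voronoi's $1/3$.

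The missing factor $(x/y)^{1/2}$ is exactly Landau's input. After the functional equation, expand $L(1-s,\tilde\pi\times\pi)$ into its absolutely convergent Dirichlet series on a far-left line. Then treat each resulting term, an integral of $\gamma(s)(nu)^{s}$ against the weight, as an oscillatory integral. This can be done by a Voronoi-type formula with stationary phase, or by Riesz means of order just above $(m^2-1)/2$ with Bessel-type asymptotics followed by finite differencing using positivity. The $n$-th dual term is then of size $x^{\frac{m^2-1}{2m^2}}n^{-\frac{m^2+1}{2m^2}}$ and is negligible once $n\gg x^{m^2-1+\epsilon}y^{-m^2}$. Using $\sum_{n\le N}a(n)\ll N$, these terms total $(x/y)^{(m^2-1)/2}$, and balancing against $y$ gives $a_m$. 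Appealing to ``the standard computation of Landau'' after a convexity bound skips exactly this step.

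\textbf{The transfer to $|\lambda_\pi(n)|^2$.} Write $G(s)=\sum_d g(d)d^{-s}$. By the Cauchy identity, at unramified $p$ one has
$G_p(X)=1-|e_2(\alpha_\pi(p))|^2X^2+O(X^3)$, where $e_2$ is the second elementary symmetric function of the Satake parameters (an exterior-square coefficient).
\begin{itemize}
\item Luo--Rudnick--Sarnak gives only $|e_2|^2\ll p^{2-4/(m^2+1)}$. Pointwise bounds therefore make $\prod_p G_p(p^{-s})$ absolutely convergent only for $\Re s>3/2-2/(m^2+1)$, which lies to the right of $1$.
\item Passing the error term through the convolution without loss needs $\sum_d|g(d)|d^{-a_m-\epsilon}<\infty$. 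That is absolute convergence for $\Re s>a_m$, not merely for $\Re s>1-\delta$.
\end{itemize}
For $m=2$ this is fine, since $G(s)=\zeta(2s)^{-1}$ up to finitely many Euler factors. For $m=3$ it is also fine: there $G_p(X)=1-|\lambda_\pi(p)|^2X^2+2(|\lambda_\pi(p)|^2-1)X^3-|\lambda_\pi(p)|^2X^4+X^6$, and $\sum_{p\le x}|\lambda_\pi(p)|^2\ll x$ gives convergence for $\Re s>1/2$. For general $m$ you need averaged control of $e_2(\alpha_\pi(p))$ and the higher coefficients, which you do not supply. The paper's sketch applies Perron directly to $L(s,\pi\times\tilde\pi)$ and does not discuss this passage either. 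You should at least not present it as a consequence of Luo--Rudnick--Sarnak.
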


The proof of the lemma follows along the lines of \cite[page 26]{2507.20653}, for example. It uses a Perron-type argument applied to the $L$-function $L(s, \pi \times \tilde{\pi})$ of degree $m^2$. For more details on the Fourier coefficients of automorphic forms and on higher rank Rankin--Selberg $L$-functions we refer to \cite{Goldfeld}.

\subsection{Twisted linear sums}  \label{subsection2.2}

Let $\pi(z)$ be a $\hbox{GL}(2, \mathbb{R})$ cusp form with Fourier coefficients $\lambda_\pi(n)$. Wilton's result states that for any $\gamma  \in \mathbb{R}$ the following bound holds uniformly on $\gamma$:
\begin{eqnarray*}
\sum_{n \leq X} \lambda_\pi (n)   
\mathrm 
e^{2  i\gamma n} \ll_{\pi} X^{1/2+\epsilon}.
\end{eqnarray*}
This result was improved by Jutila \cite{jutila} to $O(X^{1/2})$. When $\pi$ is a $\hbox{GL}(3, \mathbb{R})$, an analogous result was deduced by \cite{Miller}, who proved that for any $\gamma \in \mathbb{R}$ we have the following bound uniformly on $\gamma$:
\begin{eqnarray*}
\sum_{n \leq X} \lambda_\pi (n)     
\mathrm 
e^{ 2  i \gamma n}  \ll_{\pi} X^{3/4+\epsilon}.
\end{eqnarray*}
For the higher rank case $m \geq 3$ only weaker results are known. When $\pi $ is an automorphic irreducible cuspidal representation of $\mathrm{GL}(m) $ 
over $\mathbb Q$ with unitary central character, the best known uniform bound for all $a \in \mathbb{R}$ in the literature is due to Jiang, L\"u and Wang \cite{Jiang} who proved the bound
\begin{eqnarray} \label{boundhigher}
\sum_{n \leq X} \lambda_{\pi} (n)   
\mathrm 
e^{2 \pi \gamma i n} \ll_{m} \frac{X}{\log X}.
\end{eqnarray}
One may wish for a uniform upper bound in \eqref{boundhigher} with a power saving $O(X^{1-\delta_m  + \epsilon})$, for some $\delta_m >0$. As explained in \cite[Theorem~7.1]{Miller}, such a bound is equivalent to an explicit {\lq Ramanujan type\rq}-uniform upper bound for a period integral. 

Nevertheless, for twisted linear sums with rational angle $\gamma  = a/q \in \mathbb{Q}$, Jiang, L\"u and Wang \cite{Jiang} proved a weaker upper bound which suffices for our purposes. 
 
\begin{lemma}[Jiang--L\"u--Wang]\label{lem:585858}
Suppose $\pi $ is as in Lemma \ref{lem:clasconvexity}.
For any $a\in \mathbb Z, q\in \mathbb N$ and $x\geq 1 $ we have 
$$\sum_{ n\leq x } \lambda_\pi(n)  
\mathrm 
e^{2\pi i \frac{a}{q}n}\ll (qx)^{\frac{m+1}{m+2}+\epsilon},$$ where the implied constant depends at most on $\epsilon$ and $\pi$.
\end{lemma}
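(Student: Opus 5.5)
This bound is quoted from Jiang--L\"u--Wang, so we only need to indicate how it can be derived. The plan is to reduce to sums of $\lambda_\pi$ twisted by Dirichlet characters and then apply the convexity bound for twisted $L$-functions.

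First reduce to primitive fractions: if $\gcd(a,q)=g>1$, replace $(a,q)$ by $(a/g,q/g)$, which only decreases $q$. So assume $(a,q)=1$. We cannot use the characters of $(\mathbb Z/q\mathbb Z)^\times$ directly, since $n$ need not be coprime to $q$. Split $n$ according to $\delta=\gcd(n,q)$ and write $n=\delta n'$ with $(n',q/\delta)=1$. Then $e(an/q)=e(an'/(q/\delta))$, and expanding in characters modulo $q'=q/\delta$ gives
$$e\Big(\frac{an'}{q'}\Big)=\frac{1}{\varphi(q')}\sum_{\chi \bmod q'}\bar\chi(a)\,\tau(\chi)\,\bar\chi(n')\cdot(\text{correction for imprimitive }\chi).$$
The Gauss sums satisfy $|\tau(\chi)|\leq q'^{1/2}$. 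The constraint $n=\delta n'$ is awkward because $\lambda_\pi$ is not completely multiplicative. Handle it with the Hecke relations: use multiplicativity to peel off the part of $n$ supported on primes dividing $q$, and write the remaining sum in terms of sums of $\lambda_\pi(n)\chi(n)$ over $n\leq x/\delta$. Bound the contributions of the prime powers dividing $q$ by the Rankin--Selberg estimate (Lemma \ref{lem:clasconvexity}) together with Cauchy--Schwarz. This costs only $q^\epsilon$.

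The core estimate is, for $\chi$ modulo $r\mid q$,
$$\sum_{n\leq y}\lambda_\pi(n)\chi(n)\ll (r y)^{\frac{m+1}{m+2}+\epsilon}\cdot r^{-1/2}\quad\text{or similar}.$$
To prove it, apply a smoothed Perron formula to $L(s,\pi\times\chi)$. This is entire when $\pi$ is cuspidal and $m\geq2$, so there is no main term. Its analytic conductor is $\ll r^m(1+|t|)^m$, and the convexity bound on $\Re s=1/2$ is $(r(1+|t|))^{m/4+\epsilon}$. Take the contour at $\Re s=1+\epsilon$ with truncation height $T$ and shift to $\Re s=-\epsilon$. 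The functional equation bounds the integrand there by $(rT)^{m/2}y^{-\epsilon}$. Truncating the Perron formula costs $y^{1+\epsilon}/T$, using the Rankin--Selberg bound on average for $|\lambda_\pi|$, and the shifted line costs about $r^{m/2}T^{m/2+1}$. Balancing with $T=(y/r^{m/2})^{2/(m+2)}$ gives $y^{\frac{m+1}{m+2}}$ times a power of $r$. Alternatively, use a smooth cutoff and the Voronoi-type functional equation to keep the $r$-dependence at most $r^{\frac{m+1}{m+2}}$ after multiplying by $|\tau(\chi)|/\varphi(q')$ and summing over the $\varphi(q')$ characters.

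The main obstacle is the $q$-aspect bookkeeping. Averaging over characters with Gauss sum weights contributes a factor of size $q^{1/2}$. The $L$-function bound must therefore lose at most $q^{\frac{m+1}{m+2}-\frac12}$ after the balancing. This is plausible because the conductor is $r^m$, so the convexity dependence $r^{m/2}$ enters only through $T$. If the direct Perron balance is too lossy in $r$, a fallback is the $\mathrm{GL}(m)$ Voronoi summation formula for additive twists directly, with a smooth weight of width $x/T$. Its dual sum has length about $q^m T^m/x$, and the $\mathrm{GL}(m)$ Kloosterman-type sums are bounded trivially. Optimizing $T$ in that approach gives the same exponent $(qx)^{(m+1)/(m+2)}$. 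Small $q\leq x^\epsilon$ and the boundary terms are handled trivially using Lemma \ref{lem:clasconvexity} and Cauchy--Schwarz.
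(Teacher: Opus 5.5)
The paper gives no proof of this lemma; it quotes it from Jiang--L\"u--Wang, so your sketch has to stand on its own. Your route (split by $\gcd(n,q)$, expand into characters with Gauss sums, then Perron plus convexity for $L(s,\pi\times\chi)$) is a reasonable one. However, the step you call the main obstacle is never carried out, and the numbers you give do not fit together.

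First, the Perron accounting. Because of the factor $1/|s|$, the line $\Re s=-\epsilon$ costs $(rT)^{m/2+\epsilon}$, not $r^{m/2}T^{m/2+1}$. With your stated cost, your choice of $T$ would make the dual term $\asymp y$, i.e.\ trivial. With the correct cost, your $T=(y/r^{m/2})^{2/(m+2)}$ balances truncation and contour at $(ry)^{\frac{m}{m+2}+\epsilon}$ for primitive $\chi$. After the Gauss-sum weights, the main term is therefore $q^{1/2}(qx)^{\frac{m}{m+2}+\epsilon}$. This is $\le (qx)^{\frac{m+1}{m+2}+\epsilon}$ exactly when $q^{m/2}\le x$. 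In your scheme, optimizing $T$ cannot help when $q^{m/2}>x$ (e.g.\ $q\asymp x$, $m\geq 3$), because the truncation error is multiplied by the Gauss-sum factor too. The Voronoi fallback is only asserted.

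The missing observation is that the lemma is trivial for \emph{large} $q$. By Cauchy--Schwarz and Lemma \ref{lem:clasconvexity}, $\sum_{n\le x}|\lambda_\pi(n)|\ll x$, and $x\le (qx)^{\frac{m+1}{m+2}}$ as soon as $q\geq x^{1/(m+1)}$. In the complementary range one automatically has $q^{m/2}\le x$, so the character bound suffices there. Your closing sentence has this backwards: for $q\le x^{\epsilon}$ the trivial bound $x$ is far from $(qx)^{\frac{m+1}{m+2}}$, and that is exactly where the $L$-function argument must do the work. Alternatively, apply Perron to $\sum_n\lambda_\pi(n)\mathrm e^{2\pi i an/q}n^{-s}$ itself and expand into characters only on the shifted contour. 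Then the truncation error carries no factor $q^{1/2}$, and balancing gives $x^{\frac{m}{m+2}}q^{\frac{m+1}{m+2}}$ up to $(qx)^{\epsilon}$.

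Second, the claim that the reduction ``costs only $q^\epsilon$'' is false without Ramanujan. Let $\chi^*$ be the primitive character of conductor $r^*$ inducing $\chi$. Then $\sum_{n'}\lambda_\pi(\delta n')\chi(n')n'^{-s}$ is $L(s,\pi\times\chi^*)$ times local polynomials in $\alpha_j(p)\chi^*(p)p^{-s}$ for $p\mid q$. These come from the Hecke relations at $p\mid \delta$ and from imprimitivity. Since only $|\alpha_j(p)|<p^{1/2}$ (or Luo--Rudnick--Sarnak) is available, on $\Re s=-\epsilon$ these factors can only be bounded by roughly $(q/r^*)^{m/2}$. Rankin--Selberg with Cauchy--Schwarz controls such pieces only trivially, which loses the cancellation.

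The argument survives, but this must be checked in one of two ways. Either show that this loss, combined with the convexity bound for conductor $(r^*)^m$, stays within $(qT)^{m/2}$. Or stop the contour at $\Re s=1/2$, where the local factors are $\ll \delta^{1/2}q^{\epsilon}$ and the resulting bound still suffices for $q<x^{1/(m+1)}$.

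Likewise, the truncation error is not simply $y^{1+\epsilon}/T$. The terms with $|n-y|\le y/T$ need short-interval control of $|\lambda_\pi|^2$. Lemma \ref{lem:clasconvexity} gives this only on intervals of length $\geq y^{a_m}$, and $a_m>\frac{m}{m+2}$, so an extra term $(y/T)^{1/2}y^{a_m/2+\epsilon}$ appears. It is harmless because $a_m<1$, but it has to be carried through the balancing.
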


Finally, we recall    
the following recent improvement in the error term of the Rankin--Selberg problem:
 \cite[Corollary 1.3]{young}.
\begin{lemma}[Dasgupta--Leung--Young]
\label{lem:dasgu}
Let $\pi$ be a fixed Hecke-Maass cusp form for 
$SL_2(\Z)$. 
For  fixed $\epsilon>0$ and all $x\geq 1 $ we have  
$$
\sum_{ n \leq x } 
|\lambda_\pi(n)|^2= C_\pi x + O(x^{4/7+\epsilon})
,$$ where the implied constant depends at most on $\epsilon$ and $\pi$.\end{lemma}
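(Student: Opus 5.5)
Since this lemma is quoted from \cite[Corollary 1.3]{young}, in the paper I will simply invoke it. If I had to reconstruct the argument, I would proceed along the following lines.

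\textbf{Hecke relation and hyperbola method.} The starting point is the Hecke relation for $\mathrm{SL}_2(\Z)$ forms with real coefficients,
$$\lambda_\pi(n)^2=\sum_{d\mid n}\lambda_\pi\bigl((n/d)^2\bigr).$$
Summing over $n\le x$ turns the problem into the divisor-type sum $\sum_{dm\le x}\lambda_\pi(m^2)$. Equivalently, $L(s,\pi\times\tilde\pi)=\zeta(s)L(s,\mathrm{sym}^2\pi)/\zeta(2s)$, up to the standard normalisation. I would apply Dirichlet's hyperbola method with a split parameter $y$. The main term $C_\pi x$ comes from $\sum_{d\le x/m}1$ weighted by $\lambda_\pi(m^2)$, via the value $L(1,\mathrm{sym}^2\pi)$. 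The error terms are then of two kinds.
\begin{itemize}
\item Sums of $\psi(x/m)\lambda_\pi(m^2)$, where $\psi$ is the sawtooth function.
\item Sums $\sum_{m\le x/d}\lambda_\pi(m^2)$ over long ranges.
\end{itemize}

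\textbf{Bounds for the two error terms.} For the long sums, I would use the fact that $\lambda_\pi(m^2)$ are essentially coefficients of the $\mathrm{GL}(3)$ form $\mathrm{sym}^2\pi$. Voronoi summation, or a Perron contour shift combined with a subconvex bound for $L(1/2+it,\mathrm{sym}^2\pi)$, gives
$$\sum_{m\le Y}\lambda_\pi(m^2)\ll Y^{3/5+\epsilon},$$
which by itself would only reproduce the classical exponent $3/5$. For the sawtooth sums, I would expand $\psi$ into its truncated Fourier series. This produces exponential sums
$$\sum_{m\sim M}\lambda_\pi(m^2)\,e(hx/m).$$
I would attack these by the $\mathrm{GL}(3)$ Voronoi formula followed by stationary phase, or by the delta method with spectral averaging.

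\textbf{Main obstacle and balancing.} The key step, and the one I expect to be hardest, is a nontrivial bound for these $\mathrm{GL}(3)$ sums twisted by $e(hx/m)$ in the transition range $M\approx x^{1/2}$. Trivial and convexity inputs give exactly $3/5$, and beating it requires genuine cancellation in $h$ and $m$ simultaneously. My first attempt would be to average over $h$ with Cauchy--Schwarz and then use a large-sieve or spectral-reciprocity bound for the resulting bilinear form. Finally, I would choose $y$ and the Fourier truncation parameter to balance the two error terms, which should produce $x^{4/7+\epsilon}$.
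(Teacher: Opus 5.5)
Your proposal matches the paper: the lemma is not proved there but quoted directly from Corollary 1.3 of Dasgupta--Leung--Young, so invoking that result is all that is needed. Your reconstruction sketch is optional and only heuristic, since neither the decisive cancellation estimate nor the balancing that is supposed to give $4/7$ is carried out. Keep it out of the proof.
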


\section{Second moment with minor arcs} \label{s:proof1}
Assume that there are constants
$\gamma,\delta\in [0,1),C\in \mathbb C$ 
and that $g: \mathbb N\to \mathbb C$ satisfies 
\beq{prop:1}{\sup_{\lambda\in \mathbb R}
\left|\sum_{ n\leq x} 
\mathrm e^{i \lambda n} g(n)\right|\ll x^{\gamma}}
and \beq{prop:2}{\sum_{ n\leq x} |g(n)|^2=C x+O(x^{\delta}).}
Fix any $d\in \mathbb N, \gamma,\delta,\epsilon,N>0$ and denote 
$$\eta:=\min\left\{\frac{ 1-\gamma}{1+d(1+\gamma)},
\frac{1-\delta}{d\delta}\right \}.$$

\begin{theorem}\label{gl3} For $g:\Z\to\mathbb C$ satisfying \eqref{prop:1}-\eqref{prop:2}
and all   $ x \leq H^{\eta-\epsilon}$ we have 
$$\frac{1}{\#\{P\in \Z[t]: \deg(P)=d,|P|\leq H\}}
\sum_{\substack{ P\in \Z[t] \\\deg(P)=d , \\ |P|\leq H  }}
\left| \sum_{\substack{1\leq n \leq x\\P(n)>0} } g(P(n))\right|^2 
= C x+O\left(\frac{x}{(\log x)^N}\right).$$
The implied constant depends at most on $N,C,\gamma,\delta,\epsilon$ and 
the implied constants in
\eqref{prop:1}-\eqref{prop:2}.\end{theorem}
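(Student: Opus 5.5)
Expanding the square, the average equals
$$\sum_{n_1,n_2\leq x}\frac{1}{\#\mathcal P}\sum_{P}g(P(n_1))\overline{g(P(n_2))}\mathbf 1_{P(n_1)>0,P(n_2)>0},$$
where $\mathcal P$ denotes the set of $P$ with $\deg P=d$ and $|P|\leq H$. I would split this into the diagonal $n_1=n_2$ and the off-diagonal $n_1\neq n_2$.

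\textbf{Diagonal.} For fixed $n\leq x$ the map $P\mapsto P(n)$ is linear in the coefficients $(c_0,\dots,c_d)$. Fix $c_1,\dots,c_d$; then $P(n)=c_0+N$ with $N=\sum_{j\geq1}c_jn^j$ fixed, and $c_0$ ranges over an interval of length $2H$. Hence $\sum_{c_0}|g(c_0+N)|^2\mathbf 1_{c_0+N>0}$ is a difference of two partial sums of $|g|^2$ over intervals of integers of size $\ll Hx^d$. By \eqref{prop:2}, each such sum equals $C\cdot(\text{length})+O((Hx^d)^\delta)$. The positivity cut only removes an interval, and the proportion of $(c_1,\dots,c_d)$ for which it truncates the $c_0$-range nontrivially is $\ll 1/n$ or so; I would handle this by using \eqref{prop:2} on the truncated interval, which gives $C$ times its actual length. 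After summing over $c_1,\dots,c_d$, the main term is $C\cdot\#\{P: P(n)>0\}$ up to the relative error $(Hx^d)^\delta/H$. Since $\#\{P:P(n)>0\}=\tfrac12\#\mathcal P+\dots$, this has to be reconciled with the claimed constant $C$. I would recheck the normalization (the constant may absorb a factor, or $P(n)>0$ may hold for about half of $P$, giving $Cx/2$). In any case the diagonal is a main term times $x$ plus $O(x\,H^{\delta-1}x^{d\delta})$. This error is $\ll x^{1-\kappa}$ provided $x^{d\delta}\leq H^{1-\delta-\epsilon}$, i.e. $x\leq H^{(1-\delta)/(d\delta)-\epsilon}$, which is the second constraint in $\eta$.

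\textbf{Off-diagonal.} Fix $n_1\neq n_2$. Vary $c_0$ and $c_1$ together: $P(n_i)=c_0+c_1n_i+M_i$, where $M_i$ depends only on $c_2,\dots,c_d$. The pair $(u,v)=(P(n_1),P(n_2))$ runs over a lattice of index $|n_1-n_2|$ in a region of size roughly $H\times H$ (after the linear change of variables). I would detect the lattice condition $u\equiv v\pmod{n_1-n_2}$, suitably shifted, with additive characters $\frac1q\sum_{a\bmod q}e(a(u-v-r)/q)$, and handle the box shape by completing with smooth or Fourier cutoffs (equivalently, a Fourier expansion of the indicator of the parallelogram). This factorizes the sum into products $\bigl(\sum_u g(u)e(\theta u)\bigr)\bigl(\sum_v\bar g(v)e(-\theta v)\bigr)$ over ranges of size $\ll Hx^d$, each bounded by \eqref{prop:1} as $\ll (Hx^d)^\gamma$, at the cost of a logarithm from the $L^1$ norm of the Fourier expansion. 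The off-diagonal contribution for each pair is then $\ll (Hx^d)^{2\gamma}/(H^2)\cdot(\log)$ times the $c_2,\dots,c_d$ density. Summed over $x^2$ pairs, this gives $x^2H^{2\gamma-2}x^{2d\gamma}$ up to extra losses in $x$ from the shapes and from $q=|n_1-n_2|\leq x$. Balancing this against $x^{1-\kappa}$ should reproduce $x^{1+d(1+\gamma)}\leq H^{1-\gamma}$, the first constraint in $\eta$.

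\textbf{Main obstacle.} The hard step is the off-diagonal: converting the two-dimensional parallelogram restricted to a congruence class into a product of one-dimensional exponential sums, with only logarithmic loss and while keeping uniformity for $\lambda\in\mathbb R$. I would first try to fix $c_1$ and sum over $c_0$ only. This does not decouple $u$ and $v$, since $v=u+c_1(n_2-n_1)$. So I would instead sum over both $c_0$ and $c_1$ and write the indicator of $\{(u,v)\}$ via orthogonality in $\mathbb R/\mathbb Z$ (a Vaughan-type $\int_0^1 S(\alpha)\overline{S(\alpha)}\dots$ with a Fejér kernel). This makes the hypothesis \eqref{prop:1} directly applicable, and the power savings then give the $(\log x)^{-N}$ error comfortably.
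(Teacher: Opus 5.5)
Your diagonal argument is the paper's (fix $c_1,\dots,c_d$ and apply \eqref{prop:2} on the $c_0$-interval), and your doubt about the constant is justified. The map $P\mapsto -P$ preserves $\{P:\deg P=d,\ |P|\leq H\}$, and $P(n)=0$ for only $O(H^d)$ of these $P$. Hence half of them, up to $O(H^d)$, satisfy $P(n)>0$, and the diagonal is $\tfrac{C}{2}x+O(xH^{\delta-1}x^{d\delta})$, as your computation gives. The paper reaches $(2H)^{d+1}C$ only by replacing $\sum_{t>0,\ t\in[N-H,N+H]}|g(t)|^2$ with $2CH+O(\cdot)$ for \emph{every} $(c_1,\dots,c_d)$. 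That replacement is false whenever $N<H$; for instance the sum is empty if $N<-H$. By the symmetry $N\mapsto -N$, the positive part of $[N-H,N+H]$ has average length $H$, not $2H$. Since the off-diagonal is $o(x)$, nothing restores the missing half, so there is no normalisation to fix. What your argument (or the paper's) actually proves is the asymptotic with main term $\tfrac{C}{2}x$, and you should state it that way.

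For the off-diagonal your route is genuinely different from the paper's, and stronger. The paper invokes \cite[Lemma 3.5]{skoro}, which applies \eqref{prop:1} to only one of the two exponential sums and bounds the other trivially by $S_{|g|}(0)\ll H\max\{n_1,n_2\}^d$. That trivial bound is what forces $x^{1+d(1+\gamma)}\leq H^{1-\gamma}$. Your decoupling of $u=P(n_1)$ and $v=P(n_2)$ through the $(c_0,c_1)$-variables uses \eqref{prop:1} on both factors. It is easy to make rigorous without any lattice or shape bookkeeping. Start from the paper's representation $\int\int \overline{S_{\bar g}(\alpha_1)S_g(\alpha_2)}\prod_{j}D_H(n_1^j\alpha_1+n_2^j\alpha_2)\,\mathrm d\boldsymbol\alpha$ and bound the factors with $j\geq 2$ by $2H+1$. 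For $n_1\neq n_2$ the map $(\alpha_1,\alpha_2)\mapsto(\alpha_1+\alpha_2,\,n_1\alpha_1+n_2\alpha_2)$ is a measure-preserving endomorphism of the torus. So the remaining integral of $|D_H|\cdot|D_H|$ equals $(\int_{-\pi}^{\pi}|D_H|)^2\ll(\log H)^2$; this is the device of \cite[Lemma 2.9]{frei} used in \S\ref{s:senzaarcigrandi}. The result is $\ll H^{d-1}(\log H)^2(x^dH)^{2\gamma}$ per pair, with no loss in $q=|n_1-n_2|$. The total is $\ll x^{2+2d\gamma}H^{d-1+2\gamma}(\log H)^2$, which is admissible once $x^{1+2d\gamma}\leq H^{2(1-\gamma)-\epsilon}$. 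So your last claim is off: balancing does not reproduce $x^{1+d(1+\gamma)}\leq H^{1-\gamma}$. It gives a strictly weaker condition, which is already implied by $x\leq H^{\eta-\epsilon}$. The proof therefore works in the stated range and in a wider one; for $\mathrm{GL}(2)$ and $d=1$ one would get $x\leq H^{1/2-\epsilon}$ instead of $H^{1/5-\epsilon}$. The ``main obstacle'' you flag is not really one.
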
 
The proof is based on the ideas in  \cite[\S 3]{skoro}. We start by 
expanding 
$$\sum_{\substack{ P\in \Z[t],
\deg(P)=d \\ |P|\leq H  }}
\left| \sum_{\substack{ n \leq x\\P(n)>0} } g(P(n))\right|^2
=\sum_{ n_1,n_2\leq x}
\sum_{\substack{ P\in \Z[t], P(n_1),P(n_2)>0 \\\deg(P)=d , |P|\leq H  }}
\overline{g(P(n_1))} g(P(n_2)) $$ 
and then 
 use the circle method identity 
to write the sum over $P$ in 
the right-hand side as 
 $$\frac{1}{4\pi^2} \int_{(-\pi,\pi]^2} 
\overline{S_{\overline{g}}(\alpha_1)
S_{g}(\alpha_2)} \prod_{j=0}^d D_H({n_1}^j\alpha_1+{n_2}^j\alpha_2)\mathrm d\boldsymbol\alpha
$$ as   in \cite[Lemma 3.4]{skoro}.
Here,  for 
any $F:\mathbb N\to \mathbb C$ and
$t\in \mathbb R$ we denote 
$$ D_H(t):= \sum_{\substack{c\in \mathbb Z \\ 
|c|\leq H}} \mathrm e^{i c t}
\ \ \ \textrm{ and } \ \ \ 
S_F(t):=\sum_{c\leq (d+1) \c M^d  H}
F(c) \mathrm e^{i c t}
$$ where $\c M:= \max\{n_1,n_2\}$.

For the non-diagonal terms $n_1\neq n_2$ 
we apply 
\cite[Lemma 3.5]{skoro}  
to obtain 
$$ \sum_{\substack{ P\in \Z[t],
\deg(P)=d\\
|P|\leq H  }}
\overline{g(P(n_1))} g(P(n_2))
\ll \| S_{\overline{g}} \|_\infty 
S_{|g|}(0) H^{d-1} \frac{\c M(\log H)^2}{|n_2-n_1|}
$$ with an implied constant 
independent of $n_i$ and $H$.
By \eqref{prop:1} we get 
$\| S_{\overline{g}} \|_\infty 
\ll (\c M^d H)^{\gamma}$. 
Furthermore, Cauchy's inequality and  \eqref{prop:2}
shows that $S_{|g|}(0)\ll H \c M^d$. Thus, the contribution of the non-diagonal terms towards 
Theorem \ref{gl3} is 
$$\ll 
H^{d+\gamma} (\log H)^2
\sum_{ n_1\neq n_2\leq x}
\frac{\c M^{1+d(1+\gamma)}}{|n_2-n_1|}
\ll 
H^{d+\gamma} (\log H)^2
x^{d(1+\gamma)}
\sum_{n_2\leq x} n_2
\sum_{n_1\in [1,n_2)  }
\frac{1}{n_2-n_1}
.$$ The sum over $n_1$ in the 
outmost right-hand side is 
$O(\log n_2)$, hence, 
the overall bound is 
$$\ll H^{d+\gamma} (\log H)^2
x^{d(1+\gamma)+2} (\log x) .$$
This is $O(H^{d+1} x/(\log x)^N)$ 
if $x\leq H^{\eta-\epsilon}$.  

We have  proved     
$$\sum_{\substack{ P\in \Z[t] \\\deg(P)=d , |P|\leq H  }}
\left| \sum_{\substack{ n \leq x\\P(n)>0} }  g(P(n))\right|^2
=O\left( \frac{H^{d+1}x}{(\log x)^N}\right)+
\sum_{ n_1\leq x}
\sum_{\substack{ P\in \Z[t], P(n_1)>0
\\ \deg(P)=d, |P|\leq H}}
|g(P(n_1))|^2.$$ 
Write $P(n_1)=\sum_{j=0}^d c_j {n_1}^j$ with $|c_j|\leq H$
and $N=\sum_{j=1}^d c_j {n_1}^j$. Letting $t=c_0+N$
we use \eqref{prop:2} to write 
the sum over $P$ in the right-hand side as   
$$
\sum_{\substack{ (c_1,\ldots, c_d)\in \Z^d\\ |c_j|\leq H \forall  j }} 
\sum_{ \substack{ t>0\\t\in [N-H,N+H] }}
|g(t)|^2=
\sum_{\substack{ \b c \in \Z^d\\ |\b c|_\infty\leq H  }}
\left( 2CH+O((|N|+H)^{\beta})\right)
=(2H)^{d+1} C +O(H^d (H x^d)^{\delta}).$$
The error term   is
$O(H^{d+1}/(\log x)^N)$ if 
$ x \leq  H^{\eta-\epsilon}$.  
This concludes   the proof
of Theorem \ref{gl3}.
\section{Second moment without minor arcs}
\label{s:senzaarcigrandi}
In this section we weaken the 
assumptions of Theorem \ref{gl3}
and   prove a weaker conclusion.
Assume that there are  constants
$\sigma>0$ and $\tau\in [0,1)$ 
such that the arithmetic function 
$g: \mathbb Z\to \mathbb C$ 
satisfies the following for all $q\in \mathbb N$ and $x\geq 1$:
\beq{prop:1'}{\sup_{\substack{   a \in \Z/q\Z}}
\left|\sum_{ n\leq x}
 g(n)\mathrm e^{2\pi i \frac{a}{q}n}\right|\ll q^\sigma x^{\tau},} 
 with an implied that is independent of $q$ and $x$.
 Let \begin{equation}\label{A sfogar lo sdegno mio}
 \rho:=\min\left\{
 \frac{1-\tau}{d(1+\tau)+\max\{\sigma,1/2\}+1/2},
\frac{1-\delta}{d \delta}
\right\}.
 \end{equation}
\begin{theorem}\label{gln} Assume 
$g:\mathbb  Z\to \mathbb C$ satisfies 
 \eqref{prop:2}
and 
\eqref{prop:1'}. Fix any $\epsilon>0$ and 
 $d\in \mathbb N$. Then for all 
$ x \leq H^{\rho-\epsilon}$
we have 
$$\frac{1}{\#\{P\in \Z[t]: \deg(P)=d,|P|\leq H\}}
\sum_{\substack{ P\in \Z[t] \\\deg(P)=d , |P|\leq H  }}
\left| \sum_{\substack{  n \leq x\\ P(n)>0} } 
g(P(n))\right|^2 \ll x,$$where the implied constant
depends at most on $C,d,\epsilon,\sigma,\tau$ 
and the implied constants in  \eqref{prop:2}
and 
\eqref{prop:1'}.\end{theorem}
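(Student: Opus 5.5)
The plan follows the proof of Theorem \ref{gl3}: the only new ingredient is in the off-diagonal terms. We expand the square, split into diagonal $n_1=n_2$ and off-diagonal $n_1\neq n_2$, and write the sum over $P$ via the circle method identity as
$$\frac{1}{4\pi^2}\int_{(-\pi,\pi]^2}\overline{S_{\overline g}(\alpha_1)S_g(\alpha_2)}\prod_{j=0}^d D_H(n_1^j\alpha_1+n_2^j\alpha_2)\,\mathrm d\boldsymbol\alpha .$$
The diagonal is handled exactly as before using only \eqref{prop:2}. It gives $(2H)^{d+1}Cx+O(H^d(Hx^d)^\delta x)$, which is $O(H^{d+1}x)$ once $x\leq H^{(1-\delta)/(d\delta)-\epsilon}$. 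This accounts for the second entry of the minimum in \eqref{A sfogar lo sdegno mio}.

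For the off-diagonal terms we no longer have the uniform bound \eqref{prop:1}, so $\|S_{\overline g}\|_\infty$ is not available directly. Instead we return to the integral. The factor $j=0$ gives $D_H(\alpha_1+\alpha_2)$, and the factor $j=1$ gives $D_H(n_1\alpha_1+n_2\alpha_2)$. Put $\beta=\alpha_1+\alpha_2$. Then $n_1\alpha_1+n_2\alpha_2=n_2\beta+(n_1-n_2)\alpha_1$. The Dirichlet kernels force $\beta$ to lie within $O(1/H)$ of $0$, and force $(n_1-n_2)\alpha_1$ to lie within $O(1/H)$ of $2\pi\mathbb Z$. The latter means $\alpha_1$ is close to a rational $2\pi a/q$ with $q\mid (n_1-n_2)$, $q\leq |n_1-n_2|\leq x$.

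So we decompose the $\alpha_1$-integral according to the points $2\pi a/(n_1-n_2)$. On the part where $|\alpha_1-2\pi a/(n_1-n_2)|\leq K/H$ for a large $K$ (a power of $\log H$), we estimate $S_{\overline g}(\alpha_1)$ by partial summation from \eqref{prop:1'}. Write $\alpha_1=2\pi a/q+\theta$ with $|\theta|\ll K/H$ and sum length $\mathcal M^dH$. This gives
$$S_{\overline g}(\alpha_1)\ll q^\sigma(\mathcal M^dH)^\tau\bigl(1+|\theta|\mathcal M^dH\bigr)\ll q^\sigma(\mathcal M^dH)^\tau \mathcal M^dK.$$
The loss factor $\mathcal M^d$ here may be too wasteful. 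We would first try to keep only $1+|\theta|\mathcal M^d H$ and integrate it against the kernels, which localise $\theta$ to scale $1/(H|n_1-n_2|)$. That reduces the loss to roughly $\mathcal M^{d}/|n_1-n_2|$, or better if one uses the higher kernels $j\geq2$ to localise $\theta$ further.

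Away from these neighbourhoods, the product of the $D_H$ factors is small and we simply bound $S_{\overline g}$ trivially by $S_{|g|}(0)\ll H\mathcal M^d$, as in \cite[Lemma 3.5]{skoro}. For $S_g(\alpha_2)$ we always use Cauchy--Schwarz together with Parseval and \eqref{prop:2}, as before. After summing over $a$ modulo $q$ (giving $q$ terms) and integrating the remaining kernels, $\prod_{j\ge2}|D_H|$ contributes $H^{d-1}$ up to logs. The off-diagonal term for fixed $n_1,n_2$ should then be
$$\ll H^{d+\tau}(\log H)^{O(1)}\,\mathcal M^{d(1+\tau)}\,|n_1-n_2|^{\max\{\sigma,1/2\}-1/2}.$$
The exponent $1/2$ appears because, when applying Cauchy--Schwarz over the $q$ arcs, the $L^2$ norm of $S_g$ costs $q^{1/2}$ rather than $q$. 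Summing over $n_1\neq n_2\leq x$ gives $H^{d+\tau+o(1)}x^{d(1+\tau)+\max\{\sigma,1/2\}+3/2}$. This is $O(H^{d+1}x)$ precisely when $x\leq H^{\rho-\epsilon}$, matching the first entry of $\rho$, and that match is the consistency check for the bookkeeping.

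The main obstacle is this major-arc bookkeeping. We must control $S_{\overline g}$ near $2\pi a/q$ via \eqref{prop:1'} with the correct dependence on $q$ and on the distance $\theta$. We must also distribute Cauchy--Schwarz between the $q$ arcs so that the power of $|n_1-n_2|$ comes out as $\max\{\sigma,1/2\}+1/2$ rather than something larger. I would first try the cleanest version: bound $\sup$ of $|S_{\overline g}|$ over each arc, then apply Cauchy--Schwarz to $\int|S_g|\prod|D_H|$ over the union of arcs, and adjust only if the exponents do not match $\rho$.
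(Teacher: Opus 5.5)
The gap is in the complement of your major arcs. You keep the sharp cutoff $|c_j|\le H$, so the kernels are Dirichlet kernels. These do not localise $\alpha_1+\alpha_2$ and $n_1\alpha_1+n_2\alpha_2$ near $2\pi\mathbb Z$ in any integrated sense. Indeed $|D_H(t)|$ decays only like $\|t\|^{-1}$, so $\int_{-\pi}^{\pi}|D_H|\asymp\log H$, whereas $\int_{\|t\|\le K/H}|D_H|\asymp\log K$. The map $(\alpha_1,\alpha_2)\mapsto(\alpha_1+\alpha_2,n_1\alpha_1+n_2\alpha_2)$ preserves Haar measure on the torus. Hence the $j=0,1$ factors have mass $\asymp(\log H)^2$ outside the set where both forms lie within $K/H$ of $2\pi\mathbb Z$, against only $\asymp(\log K)^2$ inside it. For $d=1$ there are no further factors to help.

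On that complement you have no substitute for \eqref{prop:1}. The hypothesis \eqref{prop:1'} controls $S_{\overline g}$ only near fractions with small denominator. Also, \cite[Lemma 3.5]{skoro} is used in \S\ref{s:proof1} only together with the uniform bound for $\|S_{\overline g}\|_\infty$. Suppose you bound $|S_{\overline g}|\le S_{|g|}(0)\ll H\mathcal M^d$ there. If $S_g$ is also bounded trivially, you get a bound of order $H^{d+1}\mathcal M^{2d}(\log H)^2$ per pair. With Cauchy--Schwarz and Parseval for $S_g$ you still get roughly $H^{d+1}\mathcal M^{3d/2}\max\{K,n_1\}^{-1/2}$. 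The reason is that the $\alpha_1$-integral of the two kernels is, on average, of size $\|(n_2-n_1)\alpha_2\|^{-1}$ throughout the range $\|(n_2-n_1)\alpha_2\|\gg\max\{K,n_1\}/H$. Summed over $n_1\neq n_2\le x$, either bound exceeds $H^{d+1}x$ by a positive power of $x$ when $K$ is a power of $\log H$. So the off-diagonal estimate you aim for cannot be reached this way.

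The missing idea, which is what the paper does following \cite[\S 2]{frei}, uses the fact that only an upper bound is claimed. You majorise $\mathbf 1_{[-H,H]}(c_j)$ by the Gaussian $\widehat K_H(c_j)=\mathrm e^{-\pi c_j^2/H^2}$. The circle-method integral then involves the non-negative theta kernels $K_H$, whose tails satisfy $\int_{\|t\|>\delta}K_H\ll(\delta H)^{-1}\mathrm e^{-(\delta H)^2/(4\pi)}$. Take $\delta=(\log x)/H$. This super-polynomial decay absorbs the trivial bound $|S|\ll x^dH$ for both exponential sums. So the region where $\|\alpha-\beta\|$ or $\|n_1\alpha-n_2\beta\|$ exceeds $\delta$ contributes only $O(H^{d+1}/x)$.

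On the remaining region, $\alpha$ and $\beta$ both lie within $O((\log x)\mathcal M/(H|n_2-n_1|))$ of fractions with denominator $n_2-n_1$. Hence both $S(\alpha)$ and $S(\beta)$ are bounded via partial summation and \eqref{prop:1'}. The $j=0,1$ kernels have $O(1)$ total mass by \cite[Lemma 2.9]{frei}. The $\max\{\sigma,1/2\}$ in $\rho$ then comes from $\sum_{n_1\neq n_2\le x}|n_2-n_1|^{2\sigma-2}\ll x^{\max\{1,2\sigma\}}\log x$, not from a Cauchy--Schwarz over arcs.

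The price is that the diagonal can no longer be copied from \S\ref{s:proof1}, because the Gaussian weights have unbounded support. One truncates at $|\mathbf c|\ll H(\log xH)^{1/2}$, using $|g(t)|^2\ll t$ for the tail. One then applies \eqref{prop:2} on intervals of length $\asymp H$. This yields only $O(H^{d+1}x)$, which is why Theorem~\ref{gln} is an upper bound rather than an asymptotic.
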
 
While Theorem 2.2 in \cite{joni} provides a 
non-trivial bound, achieving the square-root cancellation bound
$O(x)$ seems to require some modifications in its proof.
The proof of Theorem \ref{gln} is based on
the use of Gauss kernels 
 imposed on the random coefficients of $P$,
 as   introduced in
 \cite[\S 2]{frei}. While the weights 
 in \S \ref{s:proof1} enable the circle method to 
 function under \eqref{prop:1'} rather 
 than \eqref{prop:1}, they yield only upper 
 bounds instead of the asymptotic in Theorem \ref{gl3}.

The Jacobi theta function is defined 
for $z,w\in \mathbb C$ with $\Im(w)>0$
by 
$$\theta(z;w):=\sum_{n\in \mathbb Z} 
\exp(\pi i n^2 w + 2\pi i n z ).$$ For any $H\geq 1 $ and $\alpha \in \mathbb R$ 
we let  $ K_H(\alpha) := \theta(\alpha/2\pi;i/H^2)$.
Using Poisson summation one can verify various properties of $K_H$, for example,  that 
$K_H(\alpha)$ is a  non-negative real number, see
the proof of  \cite[Lemma 2.15]{frei}. 
A straightforward calculation shows that 
for any integer $c$
the Fourier transform 
is given by $ \widehat{K}_H(c) = 
\mathrm e^{-\pi c^2/H^2}$. Now writing $P(t)=\sum_{j=0}^d c_j t^j$ we 
upper bound $\mathds 1_{[-H,H]}(c_j)$ by 
$O( \widehat{K}_H(c_j))$ for each $j$. This results in 
$$
\sum_{\substack{ P\in \Z[t] \\\deg(P)=d , |P|\leq H  }}
\left| 
\sum_{\substack{ n \leq x\\P(n)>0} } g(P(n))\right|^2
\ll
\sum_{\b c \in \Z^{d+1} } \left(\prod_{j=0}^d  \widehat{K}_H(c_j)\right)
\left| 
\sum_{\substack{ n \leq x\\P_\b c(n)>0} } 
g(P_{\b c}(n))\right|^2
$$ where $P_{\b{c} }(t)=\sum_{j=0}^d c_j t^j$.
Opening up the square we obtain 
$$\sum_{ n_1, n_2 \leq x} 
\sum_{\substack{\b c \in \Z^{d+1}\\P_\b c(n_1),
P_\b c(n_2)>0} } 
\left(\prod_{j=0}^d  \widehat{K}_H(c_j)\right)
g(P_{\b c}(n_1)) \overline{g(P_{\b c}(n_2))}
.$$ As in \cite[Equation (2.20)]{frei} we may 
now see that  \begin{equation}\label{ch'io muti consiglio}
\sum_{\substack{ P\in \Z[t] \\\deg(P)=d , |P|\leq H  }}
\left| 
\sum_{\substack{  n \leq x\\P(n)>0} } g(P(n))\right|^2
\ll \sum_{n_1,n_2\leq x}
\int_{(-\pi,\pi]^2} 
\overline{S(\alpha)} S(\beta)
\left(\prod_{k=0}^d K_H(n_1^k\alpha-n_2^k\beta) \right)
\mathrm d \alpha \mathrm d \beta
, \end{equation} where for $\gamma \in \mathbb R$ we let 
$$  S(\gamma)= \sum_{\substack{  t 
\leq (d+1) x^d H }} g(t) \mathrm e ^{i \gamma t }.$$
Note that 
$$ |S(\gamma)|\ll \sum_{  t  \leq (d+1) x^d H } |g(t)| \ll x^d H $$ by Cauchy's inequality and  \eqref{prop:2}.

Let $\|\gamma\|$ denote the distance of $\gamma$ from the closest integer multiple of $2\pi$. 
\begin{lemma}[Minor arcs for non-diagonal terms]
\label{Li prodigi della Divina Grazia nella conversione e morte di San Guglielmo}
The contribution of $n_1\neq n_2$ and $\alpha,\beta$ satisfying 
\begin{equation}
\label{Abbassa l'orgoglio}
\|\alpha-\beta\|>\frac{\log x}{H} \ \ \ \mathrm{ or } \ \ \ 
\|n_1\alpha-n_2\beta\|>\frac{\log x}{H} \end{equation}
towards the right-hand side of 
\eqref{ch'io muti consiglio}
is $\ll H^{d+1}/x$, where the implied constant is independent of 
$n_1,n_2,x$ and $H$.
\end{lemma}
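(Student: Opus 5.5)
The plan is to exploit the Gaussian decay of $K_H$ away from multiples of $2\pi$: the factor in \eqref{Abbassa l'orgoglio} that is far from $0$ gives a saving of $\exp(-c(\log x)^2)$, which beats any power of $x$. The other kernel factors are then handled by bounding all but one trivially and integrating the last one exactly. The sums $S$ are controlled in mean square via Parseval rather than pointwise, so as not to lose powers of $H$.

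\emph{Step 1: pointwise bounds for $K_H$.} Applying Poisson summation to $K_H(\alpha)=\sum_{n}\exp(-\pi n^2/H^2+in\alpha)$ gives
$$K_H(\alpha)=H\sum_{k\in\Z}\exp\left(-\pi H^2\left(\tfrac{\alpha}{2\pi}-k\right)^2\right).$$
Hence $0\leq K_H(\alpha)\ll H$ for all $\alpha$, and, since only $k$ with $|\alpha/2\pi-k|$ comparable to $\|\alpha\|/2\pi$ matter, $K_H(\alpha)\ll H\exp(-H^2\|\alpha\|^2/(4\pi))$. In particular, if $\|\alpha\|>(\log x)/H$ then $K_H(\alpha)\ll H\exp(-(\log x)^2/(4\pi))\ll_A Hx^{-A}$ for every fixed $A$. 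We also record that $\int_{-\pi}^{\pi}K_H(n\beta+\gamma)\,\mathrm d\beta=2\pi$ for every nonzero integer $n$ and real $\gamma$, since only the constant Fourier coefficient $\widehat K_H(0)=1$ survives.

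\emph{Step 2: reduce to a mean square of $S$.} In \eqref{ch'io muti consiglio} the factors with $k=0$ and $k=1$ are $K_H(\alpha-\beta)$ and $K_H(n_1\alpha-n_2\beta)$. On the region \eqref{Abbassa l'orgoglio} one of them is $\ll Hx^{-A}$ by Step 1. We use $|\overline{S(\alpha)}S(\beta)|\leq \frac12(|S(\alpha)|^2+|S(\beta)|^2)$ and treat the $|S(\alpha)|^2$ term; the other is symmetric. Since all kernel factors are nonnegative, we may drop the restriction to the region after bounding the small factor.

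We then keep one further kernel factor $K_H(n_1^k\alpha-n_2^k\beta)$, with $k\in\{0,1\}$ distinct from the small one, and bound the remaining $d-1$ factors by $O(H)$ each. The kept factor has $\beta$-coefficient $-n_2^k\neq 0$, so integrating it in $\beta$ gives exactly $2\pi$. Parseval then gives
$$\int_{-\pi}^{\pi}|S(\alpha)|^2\,\mathrm d\alpha=2\pi\sum_{t\leq (d+1)x^dH}|g(t)|^2\ll x^dH$$
by \eqref{prop:2}.

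\emph{Step 3: collect the bounds.} For each pair $n_1\neq n_2$ the contribution is
$$\ll Hx^{-A}\cdot H^{d-1}\cdot x^dH=H^{d+1}x^{d-A}.$$
Summing over the at most $x^2$ pairs gives $H^{d+1}x^{d+2-A}$, and taking $A=d+3$ yields $\ll H^{d+1}/x$ with constants independent of $n_1,n_2,x,H$.

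The main obstacle is Step 2. The trivial bound $|S|\ll x^dH$ applied to both sums loses an extra factor of $H$, and $H$ is not bounded by a power of $x$, so that loss cannot be absorbed. This is why we split via $|ab|\leq\frac12(|a|^2+|b|^2)$ and use an exact kernel integration in one variable together with Parseval in the other.
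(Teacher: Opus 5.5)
Your proof is correct, but it is organized differently from the paper's. The paper bounds both $|S(\alpha)|$ and $|S(\beta)|$ trivially by $x^dH$ and bounds the factors with $k\geq 2$ by $H^{d-1}$. It then \emph{integrates} the two linear kernels $K_H(\alpha-\beta)K_H(n_1\alpha-n_2\beta)$ jointly. The change of variables $(u,v)=(\alpha-\beta,\,n_1\alpha-n_2\beta)$ from \cite[Lemma 2.9]{frei}, which is where $n_1\neq n_2$ enters, turns the region into $\max\{\|u\|,\|v\|\}>\delta$ with $\delta=(\log x)/H$. Then $\int K_H\ll 1$ in one variable and the Gaussian tail $\int_{\|u\|>\delta}K_H(u)\,\mathrm du\ll(\delta H)^{-1}\mathrm e^{-(\delta H)^2/(4\pi)}$ in the other give the saving with no loss of $H$.

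You instead bound the far kernel pointwise by $H\mathrm e^{-(\log x)^2/(4\pi)}$, which costs a factor $H$. You recover it via $|ab|\leq\frac12(|a|^2+|b|^2)$, exact one-variable integration of the remaining linear kernel, and Parseval, i.e.\ $\int|S|^2\ll x^dH$ in place of $\|S\|_\infty^2\ll(x^dH)^2$. So your closing claim that the trivial bound on both sums ``cannot be absorbed'' is true only inside your scheme. The paper shows the trivial bound suffices once the kernel's tail is integrated rather than bounded pointwise.

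Your route is self-contained (Poisson, orthogonality, Parseval), avoids the two-dimensional covering lemma, and does not use $n_1\neq n_2$ at all. The paper's route needs only the $\ell^1$ bound $\sum_{t\leq X}|g(t)|\ll X$ rather than the mean-square bound from \eqref{prop:2}, though the latter is available here anyway.
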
\begin{proof} 
We apply  the bound 
$|K_H(\alpha)| \ll |K_H(0)|\ll H$  for $k=2,3\ldots, d$ to get 
$$ \ll x^2
(x^d H)^2 H^{d-1}
\int_{\eqref{Abbassa l'orgoglio}} 
 K_H( \alpha- \beta)   K_H(n_1 \alpha-n_2 \beta)  
\mathrm d \alpha \mathrm d \beta
.$$ By  \cite[Lemma 2.9]{frei} this  is 
$$
\ll  x^2 (x^d H)^2 H^{d-1}\int_{ 
\max\{\|\alpha\|,
\|\beta\|\}>(\log x)/H }
 K_H(\alpha)  K_H(\beta) 
\mathrm d \alpha \mathrm d \beta
.$$ Denote $\delta=(\log x)/H $.
Using the bound 
$\int_{-\pi}^\pi K_H(\beta)\mathrm d \beta \ll 1$ that is verified 
at the end of the proof of \cite[Lemma 2.15]{frei}
we obtain the bound $$
\ll  x^2 (x^d H)^2 H^{d-1}\int_{ 
\|\alpha\|>\delta} 
 K_H(\alpha) 
\mathrm d \alpha \ll \frac{x^{2d+2}H^{d+1}}{\delta H 
\mathrm e^{(\delta H)^2/(4\pi))}}
,$$ where the last bound is proved via Poisson summation 
in \cite[Lemma 2.15]{frei}.
With our choice for 
$\delta=(\log x)/H$ we have 
$x^{2d+4}\leq x^{-1}\delta H 
\mathrm e^{(\delta H)^2/(4\pi))}$. 
This means that the overall contribution   is at most 
$\ll H^{d+1}/x$, which is   admissible. \end{proof}
Recall the definition of $ \rho$
in \eqref{A sfogar lo sdegno mio}.
\begin{lemma}[Major arcs for non-diagonal terms]
\label{La serva padrona}
For all $\epsilon>0$ and $x\leq H^{\zeta-\epsilon}$,
the contribution of $n_1\neq n_2$ and $\alpha,\beta$ with 
\begin{equation}
\label{Così dunque si teme}
\|\alpha-\beta\|\leq\frac{\log x}{H} \ \ \ \mathrm{ and } \ \ \ 
\|n_1\alpha-n_2\beta\|\leq \frac{\log x}{H} \end{equation}
towards the right-hand side of 
\eqref{ch'io muti consiglio}
is $\ll H^{d+1}x$, where the implied constant is independent of 
$n_1,n_2,x$ and $H$.
\end{lemma}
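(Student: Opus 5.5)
Fix $n_1\neq n_2$ and work on the region \eqref{Così dunque si teme}. From $\|\alpha-\beta\|\leq \delta$ and $\|n_1\alpha-n_2\beta\|\leq\delta$, with $\delta=(\log x)/H$, we get $\|(n_1-n_2)\alpha\|\ll x\delta$. Thus $\alpha$ lies within $O(x\delta/|n_1-n_2|)$ of a rational point $2\pi a/q$ with $q=|n_1-n_2|\leq x$ and $a\in\Z/q\Z$, and $\beta$ lies within $\delta$ of $\alpha$. So the major arcs are $q$ intervals of length $\ll x\delta/q$ in $\alpha$, each paired with a $\beta$-interval of length $\ll\delta$. The plan is to bound $S(\alpha)$ and $S(\beta)$ on these arcs using \eqref{prop:1'} at rational points, with partial summation to move from $2\pi a/q$ to nearby $\alpha$.

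\textbf{Step 1 (bounding $S$ near rationals).} Write $\alpha=2\pi a/q+\theta$ with $|\theta|\ll x\delta/q$. Partial summation with \eqref{prop:1'} over $t\leq T:=(d+1)x^dH$ gives
\[
S(\alpha)\ll q^\sigma T^\tau(1+|\theta|T)\ll q^{\sigma}(x^dH)^{\tau}\, x^{d+1}\log x .
\]
Since $\beta=\alpha+O(\delta)$, $\beta$ is within $O(x\delta)$ of the same rational $2\pi a/q$, so $S(\beta)$ satisfies the same bound. This gives $|S(\alpha)S(\beta)|\ll q^{2\sigma}(x^dH)^{2\tau}x^{2d+2}(\log x)^2$. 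Alternatively, one can use the trivial bound $x^dH$ on one factor, whichever is better.

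\textbf{Step 2 (the kernel integral).} Bound $K_H(n_1^k\alpha-n_2^k\beta)\ll H$ for $k\geq 2$, giving a factor $H^{d-1}$. For the remaining two kernels, use the change of variables of \cite[Lemma 2.9]{frei}: this time restrict only to the major-arc set and use $\int K_H\ll 1$ for both. The resulting integral is $O(1)$ per pair $(n_1,n_2)$, or better by the measure restriction. The point is that the kernels supply a factor $H^{-2}$ relative to the trivial measure, so each pair $(n_1,n_2)$ contributes
\[
\ll H^{d-1}\sup_{\mathrm{major}}|S(\alpha)S(\beta)|.
\]

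\textbf{Step 3 (summing over pairs).} Summing over the $\ll x^2$ pairs, with $q\leq x$, gives a total of
\[
\ll H^{d-1}x^{2}\,x^{2\sigma'}(x^dH)^{1+\tau}x^{d+1}(\log x)^{O(1)}
\]
when we take one factor trivially and the other via Step 1. Here $\sigma'=\max\{\sigma,1/2\}$; the $1/2$ arises from a Cauchy--Schwarz or large-sieve treatment of the sum over $a$, in place of a pointwise bound. We require this to be $\ll H^{d+1}x$, i.e.\ $x^{O(1)}\ll H^{1-\tau}$. Optimising the powers of $x$ should reproduce the first entry of $\rho$ in \eqref{A sfogar lo sdegno mio}, which is presumably the intended $\zeta$.

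\textbf{Main obstacle.} The difficulty is getting exactly the exponent $d(1+\tau)+\max\{\sigma,1/2\}+1/2$. This requires avoiding the loss $|\theta|T$ in the partial summation and summing efficiently over $n_1,n_2$ with fixed difference $q$. I would first try to exploit the fact that the major arc in $\alpha$ has width only $\delta/q$-ish, after using both constraints more carefully. I would then apply Cauchy--Schwarz in $(\alpha,\beta)$ together with Parseval, $\int|S|^2\ll x^dH$ via \eqref{prop:2}, on one factor, and use \eqref{prop:1'} on the other factor only.
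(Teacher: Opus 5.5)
\textbf{There is a genuine gap: your bounds do not reach the range $x\leq H^{\rho-\epsilon}$, and Step 3 does not follow from Steps 1--2.}

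Your setup is the paper's: you place $\alpha$ near $2\pi a/q$ with $q=|n_1-n_2|$, use partial summation against \eqref{prop:1'}, and get $O(H^{d-1})$ per pair from the kernels. Reading $\zeta$ as the first entry of $\rho$ in \eqref{A sfogar lo sdegno mio} is also right.

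The loss is in Step 1. You correctly get $|\theta|\ll x\delta/q$, hence $|\theta|T\ll x^{d+1}(\log x)/q$, but you then discard the $1/q$. Likewise you place $\beta$ only within $O(x\delta)$ of $2\pi a/q$, whereas $|\beta-2\pi a/q|\leq x\delta/q+\delta\ll x\delta/q$ because $q\leq x$.

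With your bound $|S(\alpha)S(\beta)|\ll q^{2\sigma}(x^dH)^{2\tau}x^{2d+2}(\log x)^2$, summing over pairs only gives the range $x^{3/2+\sigma+d(1+\tau)}\log x\ll H^{1-\tau}$. The Step 3 variant, with one factor bounded trivially, is worse still: it needs $x^{2+\sigma+d(2+\tau)}\log x\ll H^{1-\tau}$. The factor $x^{2\sigma'}$ written there has no source.

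Your account of the $1/2$ is also mistaken. No sum over $a$ ever occurs: for each pair you take the supremum of $|S(\alpha)S(\beta)|$ over the major arc and integrate the kernels over the whole torus. You do not need to avoid the $|\theta|T$ loss, nor Cauchy--Schwarz with Parseval.

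\textbf{The fix is simply to keep the $1/q$.} For both $\alpha$ and $\beta$ this gives
\[
S\ll q^{\sigma}(x^dH)^{\tau}+q^{\sigma-1}(\log x)\,x^{1+d(1+\tau)}H^{\tau}=:\mathfrak F_1+\mathfrak F_2,
\]
so $|S(\alpha)S(\beta)|\ll \mathfrak F_1^2+\mathfrak F_2^2$. Now sum over pairs grouped by the difference $q$; each value of $q$ occurs for at most $x$ pairs. This gives $\sum_{n_1\neq n_2}q^{2\sigma}\ll x^{2+2\sigma}$ and $\sum_{n_1\neq n_2}q^{2\sigma-2}\ll x\sum_{q\leq x}q^{2\sigma-2}\ll x^{\max\{1,2\sigma\}}\log x$.

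This last estimate is where $\max\{\sigma,1/2\}$ really comes from: for $\sigma<1/2$ the $q$-sum converges, and only the multiplicity $x$ survives.

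The $\mathfrak F_2$-part then totals $(\log x)^3x^{2+2d(1+\tau)+\max\{1,2\sigma\}}H^{d-1+2\tau}$. This is $\ll H^{d+1}x$ exactly when $(\log x)^{3/2}x^{1/2+d(1+\tau)+\max\{\sigma,1/2\}}\ll H^{1-\tau}$, which holds for $x\leq H^{\rho-\epsilon}$. The $\mathfrak F_1$-part totals $x^{2+2\sigma+2d\tau}H^{d-1+2\tau}$ and needs only the weaker condition $x^{1/2+\sigma+d\tau}\ll H^{1-\tau}$.
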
\begin{proof} Let $q:=n_2-n_1$.
By \cite[Lemma 2.11]{frei} the condition 
\eqref{Così dunque si teme} implies that 
$$\left|\alpha - \frac{2\pi a}{q} \right|\ll 
\frac{(\log x)}{H}
\frac{|n_2|}{|q|} 
\ \ \ \textrm{ and } \ \ \ 
\left|\beta -\frac{2\pi b}{q} \right|\ll 
\frac{(\log x)}{H} \frac{|n_1|}{|q|}$$ 
for some integers 
$a,b$. 
Letting $$ A(T)=\sum_{\substack{  t \leq T }} g(t) \mathrm e^{2\pi i at/q}$$ we then 
use partial summation to see that for $\eta:=\alpha-2\pi a/q$ 
one has 
$$S(\alpha)\ll |A((d+1) x^d H) |
+ |\eta| \int_{1}^{(d+1) x^d H} |A(t)| \mathrm d t.$$ 
By \eqref{prop:1'} we get  
the overall bound   
$S(\alpha)
\ll 
|q|^\sigma
(x^d H)^{\tau} +|q|^\sigma|\eta| (x^d H)^{1+\tau}
$, which is $$
\ll
|n_2-n_1|^\sigma (x^d H)^{\tau} 
+
|n_2-n_1|^{\sigma-1} (\log x) x^{1+d(1+\tau)} H^{\tau}
=:\mathfrak F_1+\mathfrak F_2,
$$ say. Thus, $|S(\alpha) S(\beta)|\ll \mathfrak F_1^2+\mathfrak F_2^2$,
hence,  the contribution towards
the right-hand side of 
\eqref{ch'io muti consiglio} is 
$$\ll \sum_{n_1\neq n_2\leq x}
(\mathfrak F_1^2+\mathfrak F_2^2) 
\int_{(-\pi,\pi]^2} 
 \prod_{k=0}^d K_H(n_1^k\alpha-n_2^k\beta)  
\mathrm d \alpha \mathrm d \beta.$$
We use the trivial    bound 
$K_H(\gamma)\ll K_H(0) \ll H$ for all $k\notin \{0,1\}$
and then apply \cite[Lemma 2.9]{frei} to 
get the overall bound  
$$\ll   \sum_{n_1\neq n_2\leq x}
(\mathfrak F_1^2+\mathfrak F_2^2) H^{d-1}\ll
x^{2d\tau}  H^{d-1+2\tau}   \mathcal A
+
(\log x)^2 x^{2+2d(1+\tau)} H^{d-1+2\tau}  \mathcal B
,$$ where $$ \mathcal A=
\sum_{n_1\neq n_2\leq x}
|n_2-n_1|^{2\sigma} \ll x^{2+2\sigma}  
\ \ \ \textrm{ and } \ \ \ 
\mathcal B=\sum_{n_1\neq n_2\leq x}
|n_2-n_1|^{2\sigma-2}\ll  x^{\max\{1,2\sigma\}} \log x  .$$ If 
$x\leq H^{\zeta-\epsilon}$    
then one can verify   \[
x^{2d\tau}  H^{d-1+2\tau}   \mathcal A \ll H^{d+1} x \ \ \ 
\textrm{ and } \ \ \ 
 (\log x)^2 x^{2+2d(1+\tau)} H^{d-1+2\tau}  \mathcal B
\ll H^{d+1} x .\qedhere\] 
\end{proof}  
 \begin{lemma}[Diagonal terms]
 \label{A fondar le mie grandezze}
 Fix $\epsilon>0$. 
 For all $x\leq H^{\rho-\epsilon}$ we have 
$$ \sum_{ n\leq x} 
\sum_{\substack{\b c \in \Z^{d+1}\\P_\b c(n)>0 } } 
\left(\prod_{j=0}^d  \widehat{K}_H(c_j)\right)
|g(P_{\b c}(n))|^2 \ll
H^{d+1} x,$$ with an implied constant that is independent of $x$ and $H$. 
\end{lemma}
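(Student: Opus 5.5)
For each fixed $n\leq x$ I will reduce the inner sum over $\mathbf c$ to a sum of $|g(t)|^2$ over one integer $t$ and then use \eqref{prop:2}, mirroring the diagonal computation in the proof of Theorem \ref{gl3} but with Gaussian weights in place of sharp cutoffs.

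\emph{Step 1: isolate $c_0$.} Fix $n\leq x$ and $(c_1,\ldots,c_d)\in\Z^d$, and put $N:=\sum_{j=1}^d c_j n^j$. Then $P_{\mathbf c}(n)=c_0+N$. Substitute $t=c_0+N$, so that $\widehat K_H(c_0)=\mathrm e^{-\pi (t-N)^2/H^2}$. The inner sum over $c_0$ becomes
$$\sum_{t\geq 1}\mathrm e^{-\pi(t-N)^2/H^2}|g(t)|^2 .$$

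\emph{Step 2: bound the $t$-sum by dyadic shells.} Split $t$ according to $|t-N|\in [kH,(k+1)H)$ for $k=0,1,2,\ldots$. On the $k$-th shell the weight is at most $\mathrm e^{-\pi k^2}$. Each shell is a union of at most two intervals of length $H$, contained in $[1,|N|+(k+1)H]$. By \eqref{prop:2}, the sum of $|g|^2$ over an interval $(T,T+H]$ is $CH+O((T+H)^\delta)$. Summing over $k$ against the Gaussian decay gives
$$\ll H+(|N|+H)^\delta .$$

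\emph{Step 3: sum over $c_1,\ldots,c_d$ and $n$.} Sum the bound from Step 2 against $\prod_{j=1}^d \widehat K_H(c_j)$. The rapid decay lets the tails be dominated by a constant times the bulk, using $\sum_{c\in\Z}\widehat K_H(c)\ll H$. In the bulk, where all $|c_j|\ll H$, we have $|N|\ll x^dH$; in the tails the factor $\prod_j(1+|c_j|/H)^\delta$ is absorbed by the Gaussians. This gives
$$\ll H^{d}\bigl(H+(x^dH)^\delta\bigr)$$
for each $n$. Summing over $n\leq x$ gives
$$\ll H^{d+1}x+H^d x\,(x^dH)^\delta .$$

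\emph{Step 4: compare the two terms.} The second term is at most the first precisely when $x^{d\delta}\leq H^{1-\delta}$, that is, when $x\leq H^{(1-\delta)/(d\delta)}$. This is guaranteed by $x\leq H^{\rho-\epsilon}$, since $\rho\leq (1-\delta)/(d\delta)$ by \eqref{A sfogar lo sdegno mio}.

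The only delicate point is the bookkeeping of the Gaussian tails in Steps 2 and 3: the unbounded ranges of $c_j$ must be controlled uniformly. This works because \eqref{prop:2} gives polynomial growth, which the super-polynomial Gaussian decay beats. Otherwise the argument is routine.
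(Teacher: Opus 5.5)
Your proof is correct and follows the paper's argument. It substitutes $t=c_0+N$, splits $|t-N|$ into shells of width $H$ weighted by $\mathrm e^{-\pi k^2}$, and applies the short-interval consequence of \eqref{prop:2}. It then sums over the remaining $c_j$ using $\sum_c \widehat K_H(c)\ll H$ and closes with $x\leq H^{(1-\delta)/(d\delta)}$.

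The only difference is how the tails are handled. The paper first truncates to $|\mathbf c|\leq \lambda H$ with $\lambda\asymp(\log(xH))^{1/2}$, using the pointwise bound $|g(t)|^2\ll t$, and then needs each shell's error term to be $\ll H$. You instead carry the error $(|N|+H)^\delta$ through and let the Gaussians absorb its polynomial growth in the $c_j$. This is slightly cleaner and avoids the logarithmic losses.
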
\begin{proof} 
By  \eqref{prop:2} with $t=x$
we  get      
$|g(t)|^2\ll t$, hence, 
$|g(P_{\b c}(n))|^2 \ll |\b c| x^d.$
Then for any $\lambda\in \mathbb N$ we get 
$$\sum_{n\leq x} \sum_{\substack{
|\b c|>\lambda H\\ P_\b c(n)>0 }} 
\left(\prod_{j=0}^d  \widehat{K}_H(c_j)\right)
|g(P_{\b c}(n))|^2 \ll
x^{1+d } \sum_{|\b c|> \lambda H} 
|\b c|  \prod_{j=0}^d \mathrm e^{-\pi c_j^2/H^2}  
.
$$ With no loss of generality we can assume that  $|c_0|=|\b c |$, hence, 
we obtain 
$$\ll x^{1+d}  \left(\sum_{c\in \mathbb Z}\mathrm e^{-\pi c^2/H^2}   \right)^d
\sum_{  m> \lambda  H} 
m   \mathrm e^{-\pi m^2/H^2}  
\ll x^{1+d}  H^{d+1} \int_{\lambda H}^\infty  
t   \mathrm e^{-\pi t^2/H^2}  \mathrm dt 
\ll  x  H^{d+1} \frac{x^d H^2}{ \mathrm e^{\pi \lambda^2}} 
.$$   This is 
$O(H^{d+1} x)$   if   
$\lambda:=  [ 2d (\log (xH))^{1/2} ]$. 
Thus, we are left with treating  
$$\sum_{n\leq x} 
\sum_{ \substack{ |\b c|\leq 2dH (\log (xH))^{1/2}   \\P_\b c(n)>0} } 
\left(\prod_{j=0}^d  \widehat{K}_H(c_j)\right)
|g(P_{\b c}(n))|^2.$$ Fix $n,c_1,\ldots, c_d$, let $N=
\sum_{j=1}^d c_j n^j$ and 
$t=c_0+N$. Then the last sum equals $$\sum_{n\leq x} 
\sum_{\substack{
(c_1,\ldots, c_d) \in \Z^d\\  |c_j|  \leq 2d H(\log (xH))^{1/2}  }}
\left(\prod_{j=1}^d  \widehat{K}_H(c_j)\right)
\sum_{\substack{t>0\\ | t-N|\leq \
2d H(\log (xH))^{1/2} }  } 
\mathrm e^{-\pi (t-N)^2/H^2}|g(t)|^2.$$  
We upper-bound the last sum over $t$ by 
$$ 
\sum_{0\leq j \leq 2d  (\log (xH))^{1/2}  } \mathrm e^{-\pi j^2}
\sum_{\substack{ j H\leq | t-N|\leq (j+1) H}}|g(t)|^2.$$

The sum over $t$ is over over a union of at most $4$
intervals of 
 length $\gg H $
  and   centre   $N\ll x^d H (\log (xH))^{1/2}$. By \eqref{prop:2}
  we know that 
 for $1\leq y \leq x $  one has 
$$\sum_{x<n \leq x+y} |g(n)|^2 = C y+O(x^{\delta})$$
so that if $y>x^\delta$ then $\sum_{x<n \leq x+y} |g(n)|^2\ll y $.
Hence, if 
$H  \gg (x^dH (\log (xH))^{1/2})^\delta$ then the sum over $t$ is 
$$ 
\sum_{0\leq j \leq 2d  (\log (xH))^{1/2}  } \mathrm e^{-\pi j^2}
H\ll H.$$ The overall estimate then becomes 
  $$\ll \sum_{n\leq x} 
\sum_{\substack{
(c_1,\ldots, c_d) \in \Z^d\\  |c_j|  \leq H 
  \log (xH)  }}
\left(\prod_{j=1}^d  \widehat{K}_H(c_j)\right)
 H      \ll xH^{d+1}    $$
owing to  the bound 
 $\sum_{c\in \mathbb Z}\widehat{K}_H(c) \ll H$.
 The proof concludes by noting that the required 
 inequality 
 $H  \gg (x^dH (\log (xH))^{1/2})^\delta$ holds by the assumption
 $x\leq H^{\rho-\epsilon}$. 
 \end{proof}
 \subsection*{Proof of Theorem \ref{gln}} Feeding
Lemmas
\ref{Li prodigi della Divina Grazia nella conversione e morte di San Guglielmo}-\ref{La serva padrona}
into \eqref{ch'io muti consiglio} shows  that the contribution of $n_1\neq n_2$
 is admissible. The terms $n_1=n_2$ are then handled directly by
 Lemma \ref{A fondar le mie grandezze}.
\qedhere

\section{Proofs of the main results} 
\label{mentre dormi}

\begin{proof}
[Proof of Theorem \ref{thm:general_m}]
By Lemmas \ref{lem:clasconvexity}-\ref{lem:585858}
we can take any$$\delta>\frac{m^2-1}{m^2+1}, \ \ \ 
\sigma=\tau>\frac{m+1}{m+2}
$$ in  Theorem \ref{gln}. Then the constant $\rho$ 
can be taken as any positive number that is
strictly smaller than  
\[   \min \left\{ \frac{2}{2d(2m+3) + 3m + 4}, 
\frac{2}{d(m^2 - 1)} \right\},\]
which yields the value of $\alpha$ given in 
Theorem \ref{thm:general_m}.\end{proof}

\begin{proof}[Proof of Theorem \ref{thm:small_m}]
For the  $\mathrm{GL}(2)$-case 
we use Theorem \ref{gl3}
with admissible values $\gamma>1/2$ and 
$\delta>3/5$
respectively by 
Wilton's theorem \cite[Theorem 8.1]{MR1942691}
and by Lemma \ref{lem:clasconvexity}.  
Then the constant $\eta$ 
in Theorem \ref{gl3} can be taken as anything strictly smaller than $$
\min\left\{\frac{ 1}{2+3d},
\frac{2}{3d}\right \}
=\frac{ 1}{2+3d}.$$
For the  $\mathrm{GL}(3)$-case 
we invoke again Theorem \ref{gl3},
into which we feed the values 
$\gamma>3/4$ and $\delta>4/5$
respectively by the work of Miller
\cite[Theorem 1.1]{Miller} and 
Lemma \ref{lem:clasconvexity} with $m=3$.
Then the constant $\eta$ in Theorem \ref{gl3} 
can be taken as any number strictly less than  
\[\min\left\{\frac{ 1}{4+7d},
\frac{1}{4d}\right \}=\frac{ 1}{4+7d}.\qedhere \] 
\end{proof}
 
\begin{remark}\label{eightvariables}
For the special  $\mathrm{GL}(2)$-cases
coming from    Hecke--Maass cusp form $\pi$
for  $SL_2(\Z)$,
we can apply Theorem \ref{gl3}
with admissible values $\gamma>1/2$ and  $\delta>4/7$
respectively by 
Wilton's theorem \cite[Theorem 8.1]{MR1942691}
and by Lemma \ref{lem:dasgu}.  
Then the constant $\eta$ 
in Theorem \ref{gl3} can be taken as anything strictly smaller than $$
\min\left\{\frac{ 1/2}{1+3d/2},
\frac{3}{4d}\right \}=\frac{1}{2+3d},$$
which coincides  with 
Theorem \ref{thm:small_m}.
\end{remark}

\end{document}